\def\XXint#1#2#3{{\setbox0=\hbox{$#1{#2#3}{\int}$ }
		\vcenter{\hbox{$#2#3$ }}\kern-.6\wd0}}
\newtheorem{theorem}{Theorem}[section]
\newtheorem{lemma}[theorem]{Lemma}
\newtheorem{definition}[theorem]{Definition}
\newtheorem{proposition}[theorem]{Proposition}
\numberwithin{equation}{section}
\newenvironment{proof}[1][Proof]{\noindent\textbf{#1.} }{\hfill $\Box$}
\makeatletter\setlength{\textwidth}{16cm}
\begin{document}
	
\title{Global well-posedness of the fractional dissipative system in the framework of variable Fourier--Besov spaces\footnote{E-mail: gvh@westlake.edu.cn, jihzhao@163.com.}}
\author{Gast\'on Vergara-Hermosilla$^{\text{1}}$, Jihong Zhao$^{\text{2}}$\\
[0.2cm] {\small $^{\text{1}}$Institute for Theoretical Sciences, Westlake University, Hangzhou, Zhejiang 310024, China.}\\
[0.2cm] {\small $^{\text{2}}$School of Mathematics and Information Science, Baoji University of Arts and Sciences,}\\
[0.2cm] {\small  Baoji, Shaanxi 721013,  China}}

\date{\today}
\maketitle
\begin{abstract}
In this paper, we are concerned with the well-posed issues of the fractional dissipative system in the framework of the Fourier--Besov spaces with variable regularity and integrability indices. By fully using some basic properties of these variable function spaces, we establish the linear estimates in variable Fourier--Besov spaces for the fractional heat equation. Such estimates are fundamental for solving certain dissipative PDE's of fractional type. As an applications, we prove global well-posedness in variable Fourier--Besov spaces for the 3D generalized incompressible  Navier--Stokes equations and the 3D fractional Keller--Segel system.
\end{abstract}
\smallbreak

\textit{Keywords}: Variable Fourier--Besov spaces; fractional heat equation; generalized Navier--Stokes equations; fractional Keller--Segel system; global well-posedness.
\smallskip

\textit{2020 AMS Subject Classification}:  35A01,  35Q30, 46E30,  92C17

\section{Introduction}

The purpose of this paper is to develop a general method to establish the global well-posedness of the Cauchy problem of the generalized incompressible Navier--Stokes equations and the fractional Keller--Segel system in the framework of the variable Fourier--Besov spaces through the abstract Banach fixed point method.  Both of the two equations can be formulated as the following fractional dissipative form:
\begin{equation}\label{eq1.1}
\begin{cases}
\partial_{t}u+\Lambda^{\alpha} u=F(u), \ \ \ &t\in \mathbb{R}^{+}, x\in \mathbb{R}^{n},\\
u(0,x)=u_{0}(x), \ \ \  &x\in \mathbb{R}^{n},
\end{cases}
\end{equation}
where $0<\alpha\leq2$ is a real number and $F(u)$ is the nonlinear term.  The fractional power of Laplacian $\Lambda^{\alpha}=(-\Delta)^{\frac{\alpha}{2}}$ is defined by
\begin{equation*}
  \Lambda^{\alpha}u(x):
    =\sigma_{n,\alpha}P.V.\int_{\mathbb{R}^{n}}\frac{u(x)-u(y)}{|x-y|^{n+\alpha}}dy
\end{equation*}
and $\sigma_{n,\alpha}$ is a normalization constant.
\smallbreak

By the classical fixed point method, the well-posedness of the incompressible Navier--Stokes equations and the Keller--Segel system in the Fourier--Besov spaces $F\dot{B}^{s}_{p,r}(\mathbb{R}^{n})$ with constant regularity and integrability indices have been obtained. In \cite{I11},  Iwabuchi first introduced the Fourier--Besov spaces $F\dot{B}^{s}_{p,r}(\mathbb{R}^{n})$ to study the global well-posed and ill-posed issues of the parabolic-elliptic Keller--Segel system. Later on,  Iwabuchi--Takada \cite{IT14} used the critical Fourier--Besov spaces $F\dot{B}^{s}_{p,r}$-spaces to establish the global well-posedness (uniformly with respect to
the angular velocity $\Omega$) for the Navier--Stokes--Coriolis system. Konieczny--Yoneda \cite{KY11} showed global well--posedness and
asymptotic stability of small solutions for the 3D Navier--Stokes equations in the critical Fourier--Besov spaces. We refer to \cite{ZZ18,ZZ19} to more well-posed results in Fourier--Besov spaces.
\smallbreak

On the other hand, spaces of variable integrability, also known as variable Lebesgue spaces, are generalization of the classical Lebesgue spaces, replacing the constant $p$ with a variable exponent function $p(\cdot)$.  The origin of the variable Lebesgue spaces $L^{p(\cdot)}(\mathbb{R}^{n})$ can be traced back to Orlicz \cite{O31} in 1931, and then studied two decades later by Nakano \cite{N50, N51}, but the modern development started with the foundational paper of Kov\'{a}\v{c}ik--R\'{a}kosn\'{i}k \cite{KR91}, as well as the papers of Cruz-Uribe \cite{C03} and Diening \cite{D04}. Notice that not all properties of the usual Lebesgue spaces $L^{p}(\mathbb{R}^{n})$ can be generalized to the variable Lebesgue spaces $L^{p(\cdot)}(\mathbb{R}^{n})$. For example,  the variable Lebesgue spaces are not translation invariant, thus the convolution of two functions $f$ and $g$ is not well-adapted, and the Young's inequality are not valid anymore (cf. \cite{CF13}, Section 5.3). In consequence, it is difficult to extend some classical well-posed results results of evolution equations to the variable function spaces. Motivated by some well-posed results in the variable function spaces obtained recently in \cite{R18, RA19, V241, VZ241, VZ242}, we combine the Fourier splitting method and the product estimates in Fourier--Besov spaces to establish the global well-posedness of the evolution equations of fractional type in the variable Fourier--Besov spaces.
\smallbreak

To illustrate the main idea of our approach, we first revisit the Cauchy problem of the fractional heat equation:
\begin{equation}\label{eq1.2}
\begin{cases}
\partial_{t}u+\Lambda^{\alpha} u=f(t,x), \ \ \ &t\in \mathbb{R}^{+}, x\in \mathbb{R}^{n},\\
u(0,x)=u_{0}(x), \ \ \  &x\in \mathbb{R}^{n}.
\end{cases}
\end{equation}
We obtain the following solvability result in the variable Fourier--Besov spaces.
\begin{theorem}\label{th1.1}
Let $0<T\leq \infty$, $1\leq \rho,r\leq\infty$ and $s(\cdot), p(\cdot)\in \mathcal{P}_{0}^{\log}(\mathbb{R}^{n})$. Assume that $u_{0}\in F\dot{B}^{s(\cdot)}_{p(\cdot),r}(\mathbb{R}^{n})$, $f\in\mathcal{L}^{\rho}(0,T; F\dot{B}^{s(\cdot)-\alpha+\frac{\alpha}{\rho}}_{p(\cdot),r}(\mathbb{R}^{n}))$. Then the problem \eqref{eq1.2} admits a unique solution
$$
  u\in \mathcal{L}^{\infty}(0,T; F\dot{B}^{s(\cdot)}_{p(\cdot),r}(\mathbb{R}^{n}))\cap \mathcal{L}^{\rho}(0,T; F\dot{B}^{s(\cdot)+\frac{\alpha}{\rho}}_{p(\cdot),r}(\mathbb{R}^{n})).
$$
Moreover, for any $\rho_{1}\in[\rho,\infty]$, there exists a constant $C$ depending only on $n$ such that
\begin{equation}\label{eq1.3}
\|u\|_{\mathcal{L}^{\rho_{1}}_{T}(F\dot{B}^{s(\cdot)+\frac{\alpha}{\rho_{1}}}_{p(\cdot),r})}\leq C\left(\|u_{0}\|_{ F\dot{B}^{s(\cdot)}_{p(\cdot),r}}+\|f\|_{\mathcal{L}^{\rho}_{T}(F\dot{B}^{s(\cdot)-\alpha+\frac{\alpha}{\rho}}_{p(\cdot),r})}\right).
\end{equation}
\end{theorem}
\begin{proof}
Since $u_{0}$ and $f$ are temperate distributions, the fractional heat equation \eqref{eq1.2} admits a unique solution $u$ in $\mathcal{S}'((0,T)\times \mathbb{R}^{n})$, which satisfies
\begin{equation*}
\widehat{u}(t,\xi)=e^{-t|\xi|^{\alpha}}\widehat{u}_{0}(\xi)+\int_{0}^{t}e^{-(t-\tau)|\xi|^{\alpha}}\widehat{f}(\tau,\xi)d\tau.
\end{equation*}
Let $\phi_{j}$ be a smooth radial  non-increasing  function such that it is supported in the ring $\{|\xi|\sim2^{j}\}$. Then there exists a constant $\kappa>0$ such that
\begin{align*}
  \|2^{js(\cdot)}\phi_{j}\widehat{u}(t,\xi)\|_{L^{\rho_{1}}_{T}(L^{p(\cdot)}_{\xi})}&\leq \|e^{-\kappa t2^{\alpha j}}2^{js(\cdot)}\phi_{j}\widehat{u}_{0}(\xi)\|_{L^{\rho_{1}}_{T}(L^{p(\cdot)}_{\xi})}\nonumber\\
  &+\left\|\int_{0}^{t}e^{-\kappa(t-\tau)2^{\alpha j}}2^{js(\cdot)}\phi_{j}\widehat{f}(\tau,\xi)d\tau\right\|_{L^{\rho_{1}}_{T}(L^{p(\cdot)}_{\xi})}.
\end{align*}
Applying Young's inequality with respect to the time variable, one gets
\begin{align}\label{eq1.4}
  \|2^{js(\cdot)}\phi_{j}\widehat{u}(t,\xi)\|_{L^{\rho_{1}}_{T}(L^{p(\cdot)}_{\xi})}&\leq \left(\frac{1-e^{-\kappa 2^{\alpha j}\rho_{1}T}}{\kappa2^{\alpha j}\rho_{1}}\right)^{\frac{1}{\rho_{1}}}\|2^{js(\cdot)}\phi_{j}\widehat{u}_{0}(\xi)\|_{L^{p(\cdot)}_{\xi}}\nonumber\\
  &+ \left(\frac{1-e^{-\kappa 2^{\alpha j}\rho_{2}T}}{\kappa2^{\alpha j}\rho_{2}}\right)^{\frac{1}{\rho_{2}}}\left\|2^{js(\cdot)}\phi_{j}\widehat{f}(\tau,\xi)\right\|_{L^{\rho}_{T}(L^{p(\cdot)}_{\xi})},
\end{align}
where $1+\frac{1}{\rho_{1}}=\frac{1}{\rho}+\frac{1}{\rho_{2}}$. Finally, taking $\ell^{r}$-norm to both sides of \eqref{eq1.4}, we conclude that
\begin{align*}
  \|u\|_{\mathcal{L}^{\rho_{1}}_{T}(F\dot{B}^{s(\cdot)+\frac{\alpha}{\rho_{1}}}_{p(\cdot),r})}&\leq \left[\sum_{j\in\mathbb{Z}}\left(\frac{1-e^{-\kappa 2^{\alpha j}\rho_{1}T}}{\kappa\rho_{1}}\right)^{\frac{r}{\rho_{1}}}\|2^{js(\cdot)}\phi_{j}\widehat{u}_{0}(\xi)\|_{L^{p(\cdot)}_{\xi}}^{r}\right]^{\frac{1}{r}}\nonumber\\
  &+ \left[\sum_{j\in\mathbb{Z}}\left(\frac{1-e^{-\kappa 2^{\alpha j}\rho_{2}T}}{\kappa \rho_{2}}\right)^{\frac{r}{\rho_{2}}}\left\|2^{j(s(\cdot)-\alpha+\frac{\alpha}{\rho})}\phi_{j}
  \widehat{f}(\tau,\xi)\right\|_{L^{\rho}_{T}(L^{p(\cdot)}_{\xi})}^{r}\right]^{\frac{1}{r}}\nonumber\\
  &\leq C\left(\|u_{0}\|_{ F\dot{B}^{s(\cdot)}_{p(\cdot),r}}+\|f\|_{\mathcal{L}^{\rho}_{T}(F\dot{B}^{s(\cdot)-\alpha+\frac{\alpha}{\rho}}_{p(\cdot),r})}\right).
\end{align*}
Thus we get $u\in \mathcal{L}^{\infty}(0,T; F\dot{B}^{s(\cdot)}_{p(\cdot),r}(\mathbb{R}^{n}))\cap \mathcal{L}^{\rho}(0,T; F\dot{B}^{s(\cdot)+\frac{\alpha}{\rho}}_{p(\cdot),r}(\mathbb{R}^{n}))$ and the inequality \eqref{eq1.3}.
\end{proof}
 \smallbreak

Now we state our main results concerning about the global well-posedness of the 3D generalized Navier--Stokes equations and the 3D fractional  Keller--Segel system in the Fourier--Besov spaces with variable regularity and integrability indices.
\subsection{The generalized incompressible Navier--Stokes equations}
The Cauchy problem of the generalized incompressible Navier--Stokes equations reads
\begin{equation} \label{eq3.1}
\left\{
\begin{aligned}
& \partial_t u +\Lambda^{\alpha} u + u\cdot\nabla u +\nabla \pi =f,  \\
& {\rm div} u=0,\\
& u(0,x) =u_0(x),
\end{aligned}
\right.
\end{equation}
where  the unknown functions $u$ and $\pi$ stand for the velocity and the pressure of the fluid, respectively, $u_{0}$ is a given initial velocity with compatibility condition ${\rm div}u_{0}=0$ and $f$ is a given external force. Notice that the divergence free condition determines the pressure $\pi$ through the relation
\begin{equation*}\label{eq3.2}
 -\Delta \pi=\sum_{j,k=1}^{n}\partial_{x_{j}}\partial_{x_{k}}(u^{j}u^{k}),
\end{equation*}
where $u^{j}$ is the $j$-th component of the velocity field $u$.
\smallbreak

Equations \eqref{eq3.1} is invariant under a particular change of time and space scaling. More precisely, if $u$ solves \eqref{eq3.1} with initial data $u_{0}$ and $f$, so does for any $\lambda>0$ the vector field $u_{\lambda}:(t,x)\mapsto\lambda^{\alpha-1} u(\lambda^{\alpha}t,\lambda x)$ with initial data $u_{0, \lambda}$ and $f_{\lambda}$, where
\begin{equation*}
 u_{0, \lambda}: x\mapsto\lambda^{\alpha-1} u_{0}(\lambda x) \ \ \ \text{and}\ \ \  f_{\lambda}:(t,x)\mapsto\lambda^{2\alpha-1} f(\lambda^{\alpha}t,\lambda x).
\end{equation*}
This scaling invariance is particularly significant for the equations \eqref{eq3.1} and leads to the following definition. A function space is called critical for the equations \eqref{eq3.1} if it is invariant under the scaling
\begin{equation}\label{eq3.2}
 u_{0,\lambda}(x)=\lambda^{\alpha-1} u_{0}(\lambda x).
\end{equation}

When $\alpha=2$, the equations \eqref{eq3.1} becomes the following classical incompressible Navier--Stokes equations:
\begin{equation} \label{eq3.3}
\left\{
\begin{aligned}
& \partial_t u -\Delta u + u\cdot\nabla u +\nabla \pi =f,  \\
& {\rm div} u=0,\\
& u(0,x) =u_0(x).
\end{aligned}
\right.
\end{equation}
Clearly,  under the scaling \eqref{eq3.2} with $\alpha=2$, the space $ \dot{H}^{\frac{n}{2}-1}(\mathbb{R}^{n})$ is a critical space for the equations \eqref{eq3.3}.  The global well-posedness of \eqref{eq3.3} with small initial data in $\dot{H}^{\frac{n}{2}-1}(\mathbb{R}^{n})$ has been proved by  Fujita--Kato \cite{FK64}. Since then, a number of works have been devoted to establishing similar well-posed results for larger class of initial data. Here we list some of them as follows.
 \begin{itemize}
 \item Kato \cite{K84} proved the global well-posedness for small data and local well-posedness for large data in the critical Lebesgue space $L^{n}(\mathbb{R}^{n})$, see also Giga \cite{G86}.

 \item  Barraza \cite{B96, B99} and Yamazaki \cite{Y00} established the global existence, regularity and stability of solutions in the critical Lorentz spaces  $L^{n,\infty}(\mathbb{R}^{n})$.

 \item  Cannone \cite{C97} and Planchon \cite{P96} proved the same results for small initial data in the  critical Besov spaces $\dot{B}^{-1+\frac{n}{p}}_{p,\infty}(\mathbb{R}^{n})$, see also Chemin \cite{C92}.

\item  Koch--Tataru \cite{KT01} proved the global well-posedness for small data in the space $BMO^{-1}$, which might be regarded as the largest space so that the Navier--Stokes equations \eqref{eq3.3} is known to be well-posed.

\item  Bourgain--Pavlovi\'{c} \cite{BP08} proved ill-posedness for the equations \eqref{eq3.3} in the largest scaling invariant space  $\dot{B}^{-1}_{\infty,\infty}(\mathbb{R}^{3})$, see Yoneda \cite{Y10} and Wang \cite{W15} for more ill-posedness results.

 \item  The literatures listed here are far from being complete; we refer the readers to see \cite{L16} for more expositions and references cited therein.
 \end{itemize}
\smallbreak

In the scenario where $1<\alpha<2$,  the generalized Navier--Stokes equations \eqref{eq3.1} was proposed in \cite{MGS13} to describe fluids with internal friction, and it also obtained from a stochastic Lagrangian particle approach by \cite{Z12}. Concerning about the well-posedness of the equations \eqref{eq3.1}, Lions \cite{L69} first proved global existence and
uniqueness of the classical solution when $\alpha\geq \frac{5}{2}$ in dimension $3$,  and the solution satisfies the following energy inequality:
\begin{equation*}\label{eq3.5}
 \int_{\mathbb{R}^{3}}|u|^{2}dx +\int_{0}^{T}|\Lambda^{\frac{\alpha}{2}}u|^{2}dxdt\leq  \int_{\mathbb{R}^{3}}|u|^{2}dx\ \ \ \text{for}\ \ \ 0<t\leq T.
\end{equation*}
We also list some well-posedness results of the equations \eqref{eq3.1} as follows.
 \begin{itemize}
 \item Wu \cite{W03} proved global regularity of solutions when $\alpha\geq 1+\frac{n}{2}$ in general dimension $n$.

 \item  Wu \cite{W05} proved the global well-posedness with small initial data $u_0$ in the critical Besov spaces $\dot{B}^{1-\alpha+\frac{n}{p}}_{p,q}(\mathbb{R}^{n})$ for $1<\alpha\leq2$, $2<p<\infty$ and $1\leq q \leq \infty$.

    \item Yu--Zhai \cite{YZ14} established the global existence and uniqueness of solutions  with small initial data in the critical Besov space $\dot{B}^{1-\alpha}_{\infty,\infty}(\mathbb{R}^{n})$ for $1<\alpha<2$.

     \item Li--Zhai \cite{LZ10} established the global well-posdness and regularity with small initial data in some critical $Q$-spaces for $1<\alpha< 2$.

    \item Huang--Wang \cite{HW13} proved the global well-posedness and Gevrey analyticity of solutions with small data  either in critical Besov spaces $\dot{B}^{\frac{n}{p}}_{p,1}(\mathbb{R}^{n})$ ($1\leq p<\infty$) or in $\dot{B}^{0}_{\infty,1}(\mathbb{R}^{n})$ for $\alpha=1$. Similar results also hold for $1<\alpha<2$.
 \end{itemize}
\smallbreak

Recently, some results on global well-posedness in the variable function spaces have been obtained for the 3D generalized Navier--Stokes equations \eqref{eq3.1}. In \cite{V241}. by overcoming the difficulties caused by the boundedness of the Riesz potential in a variable Lebesgue spaces,  the first author of this paper proved the global well-posedness of the equations \eqref{eq3.1} in the framework of variable Lebesgue spaces $L^{p(\cdot)}(\mathbb{R}^{3})\cap L^{\frac{3}{\alpha-1}}(\mathbb{R}^{3})$. Ru--Abidin \cite{RA19} established the global well-posedness of the equations \eqref{eq3.1} with small initial data in the variable Fourier--Besov spaces $F\dot{B}^{4-\alpha- \frac{3}{p(\cdot)}}_{p(\cdot),r}(\mathbb{R}^{3})$.  Similar results also hold for the classical Navier--Stokes equations \eqref{eq3.3}, see \cite{CV24, R18, VZ242}.
\smallbreak

Motivated by the above well-posed results in variable function spaces, the first purpose of this paper is to establish the global well-posedness of the 3D generalized Navier--Stokes equations \eqref{eq3.1} in the framework of the Fourier--Besov spaces with variable regularity and integrability indices. The main result is as follows.

\begin{theorem}\label{th3.1}
Let $n=3$,  $1<\alpha\leq 2$, $p(\cdot)\in \mathcal{P}^{\log}_{0}(\mathbb{R}^{3})$ with $2\leq p^{-}\leq p(\cdot) \leq p^{+}\leq \frac{6}{5-2\alpha}$, $1\leq \rho\leq+\infty$. For any $u_{0}\in F\dot{B}^{4-\alpha-\frac{3}{p(\cdot)}}_{p(\cdot),1}(\mathbb{R}^{3})$ and  $f\in\mathcal{L}^{1}(0,\infty; F\dot{B}^{4-\alpha-\frac{3}{p(\cdot)}}_{p(\cdot),1}(\mathbb{R}^{3}))\cap \mathcal{L}^{1}(0,\infty; F\dot{B}^{\frac{5}{2}-\alpha}_{2,1}(\mathbb{R}^{3}))$, there exists a positive constant $\varepsilon$ such that if the initial data $u_{0}$ and $f$ satisfy
\begin{equation*}
\|u_{0}\|_{F\dot{B}^{4-\alpha-\frac{3}{p(\cdot)}}_{p(\cdot),1}}+\|f\|_{\mathcal{L}^{1}_{t}(F\dot{B}^{4-\alpha-\frac{3}{p(\cdot)}}_{p(\cdot),1})}
+\|f\|_{\mathcal{L}^{1}_{t}(F\dot{B}^{\frac{5}{2}-\alpha}_{2,1})}\leq \varepsilon,
\end{equation*}
then the generalized Navier--Stokes equations \eqref{eq3.1} admits a unique global solution $u$ such that
\begin{equation*}
u\in \mathcal{L}^{\rho}(0,\infty; F\dot{B}^{4-\alpha-\frac{3}{p(\cdot)}+\frac{\alpha}{\rho}}_{p(\cdot),1}(\mathbb{R}^{3}))\cap \mathcal{L}^{1}(0,\infty; F\dot{B}^{\frac{5}{2}}_{2,1}(\mathbb{R}^{3}))\cap \mathcal{L}^{\infty}(0,\infty; F\dot{B}^{\frac{5}{2}-\alpha}_{2,1}(\mathbb{R}^{3})).
\end{equation*}
\end{theorem}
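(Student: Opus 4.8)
The plan is to reformulate \eqref{eq3.1} as a fixed-point equation and apply Proposition \ref{pro2.12}. Acting with the Leray projection $\mathbb{P}=\mathrm{Id}-\nabla(-\Delta)^{-1}\mathrm{div}$ to remove the pressure and using Duhamel's formula for the semigroup $e^{-t\Lambda^{\alpha}}$, the system becomes $u=y+\mathcal{B}(u,u)$ with
$$y:=e^{-t\Lambda^{\alpha}}u_{0}+\int_{0}^{t}e^{-(t-\tau)\Lambda^{\alpha}}\mathbb{P}f\,d\tau,\qquad \mathcal{B}(u,v):=-\int_{0}^{t}e^{-(t-\tau)\Lambda^{\alpha}}\mathbb{P}\,\mathrm{div}(u\otimes v)\,d\tau.$$
I would work in the Banach space $\mathcal{X}$ obtained by intersecting the variable-exponent family $\mathcal{L}^{\rho}(0,\infty;F\dot{B}^{4-\alpha-\frac{3}{p(\cdot)}+\frac{\alpha}{\rho}}_{p(\cdot),1})$ (for all $\rho\in[1,\infty]$) with the two constant-exponent spaces $\mathcal{L}^{1}(0,\infty;F\dot{B}^{\frac{5}{2}}_{2,1})$ and $\mathcal{L}^{\infty}(0,\infty;F\dot{B}^{\frac{5}{2}-\alpha}_{2,1})$, writing $s_c:=4-\alpha-\frac{3}{p(\cdot)}$ for the critical variable regularity. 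The two $p=2$ factors are included on purpose: they are the spaces in which the quadratic term can actually be estimated, since the product lemmas are stated for constant exponents.

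The first step is to bound the drift $y$. As $\mathbb{P}$ is a matrix of Riesz transforms, Lemma \ref{le2.4} shows it is bounded on each $F\dot{B}^{s(\cdot)}_{p(\cdot),r}$, so $\mathbb{P}f$ obeys the same estimates as $f$. Applying Theorem \ref{th1.1} with base time-exponent $\rho=1$ (matching the hypothesis $f\in\mathcal{L}^{1}_{t}(\cdots)$) and output exponent $\rho_1$ ranging over $[1,\infty]$, once in the variable scale with $s(\cdot)=s_c$ and once in the constant scale with $(s,p)=(\tfrac52-\alpha,2)$, gives
$$\|y\|_{\mathcal{X}}\leq C\Big(\|u_{0}\|_{F\dot{B}^{s_c}_{p(\cdot),1}}+\|f\|_{\mathcal{L}^{1}_{t}(F\dot{B}^{s_c}_{p(\cdot),1})}+\|f\|_{\mathcal{L}^{1}_{t}(F\dot{B}^{\frac52-\alpha}_{2,1})}\Big)\leq C\varepsilon.$$

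The core of the argument is the bilinear bound $\|\mathcal{B}(u,v)\|_{\mathcal{X}}\leq C\|u\|_{\mathcal{X}}\|v\|_{\mathcal{X}}$. For each target norm I again invoke Theorem \ref{th1.1}, now with forcing $-\mathbb{P}\,\mathrm{div}(u\otimes v)$ and base exponent $\rho=1$; since the whole output family $\rho_1\geq1$ is controlled by the single quantity $\|\mathbb{P}\,\mathrm{div}(u\otimes v)\|_{\mathcal{L}^{1}_{t}(F\dot{B}^{s(\cdot)}_{p(\cdot),1})}$ and $\mathbb{P}$ is bounded (Lemma \ref{le2.4}), everything reduces to estimating $\mathrm{div}(u\otimes v)$ in $\mathcal{L}^{1}_t(F\dot{B}^{s_c}_{p(\cdot),1})$ and in $\mathcal{L}^{1}_t(F\dot{B}^{\frac52-\alpha}_{2,1})$. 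Placing the derivative on one factor via Bony's decomposition and part (i) of Lemma \ref{le2.7}, the key space estimate is the mixed-exponent product inequality
$$\|u\cdot\nabla v\|_{F\dot{B}^{s_c}_{p(\cdot),1}}\lesssim \|u\|_{F\dot{B}^{s_c}_{p(\cdot),1}}\,\|\nabla v\|_{F\dot{B}^{\frac32}_{2,1}}\lesssim \|u\|_{F\dot{B}^{s_c}_{p(\cdot),1}}\,\|v\|_{F\dot{B}^{\frac52}_{2,1}},$$
together with its pure $p=2$ analogue in which $s_c$ is replaced by $\tfrac52-\alpha$. Both follow from a variable-exponent version of Lemma \ref{le2.9} with $(p_1,p_2)=(p(\cdot),2)$ and $(s_1,s_2)=(s_c,\tfrac32)$: the admissibility condition $s_1\le 3\min\{1-\tfrac1{p_1},1-\tfrac1{p_2}\}=\tfrac32$ is exactly the hypothesis $p^{+}\le\tfrac{6}{5-2\alpha}$, which forces $s_c\le\tfrac32$. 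A Hölder inequality in time (pairing the $\mathcal{L}^{\infty}_t$ factor with the $\mathcal{L}^{1}_t$ factor) then yields $\|\mathcal{B}(u,v)\|_{\mathcal{X}}\lesssim \|u\|_{\mathcal{X}}\|v\|_{\mathcal{X}}$.

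With $\|y\|_{\mathcal{X}}\le C\varepsilon$ and the bilinear bound in hand, Proposition \ref{pro2.12} produces, for $\varepsilon$ small enough, a unique $u\in\mathcal{X}$ with $\|u\|_{\mathcal{X}}\le 2C\varepsilon$ solving $u=y+\mathcal{B}(u,u)$; this is the claimed global solution, and uniqueness in the small ball is part of the proposition. The main obstacle I anticipate is establishing the variable-exponent product estimate above: because $L^{p(\cdot)}$ is neither translation invariant nor closed under convolution, the classical paraproduct argument cannot be transplanted verbatim, and one must instead dominate the Fourier-side convolutions by the Hardy--Littlewood maximal operator and use its $L^{p(\cdot)}$-boundedness (Lemma \ref{le2.4}), which is where the log-Hölder regularity $p(\cdot)\in\mathcal{P}^{\log}_{0}$ and the ceiling $s_c\le\tfrac32$ become essential.
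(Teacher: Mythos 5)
Your overall architecture — Leray projection, Duhamel, the solution space $\mathcal{L}^{\rho}_{t}(F\dot{B}^{4-\alpha-\frac{3}{p(\cdot)}+\frac{\alpha}{\rho}}_{p(\cdot),1})\cap\mathcal{L}^{1}_{t}(F\dot{B}^{\frac{5}{2}}_{2,1})\cap\mathcal{L}^{\infty}_{t}(F\dot{B}^{\frac{5}{2}-\alpha}_{2,1})$, reduction of all semigroup bounds to $\mathcal{L}^{1}_{t}$ norms of the source via Theorem \ref{th1.1}, and the final appeal to Proposition \ref{pro2.12} — coincides with the paper's, and your index bookkeeping for Lemma \ref{le2.9} is correct. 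But the proposal rests on two unproven steps, and the one you yourself flag as ``the main obstacle'' is precisely the crux of the theorem. First, your bilinear estimate in the variable scale is made to depend on a ``variable-exponent version of Lemma \ref{le2.9}'' with $p_{1}=p(\cdot)$. No such lemma exists in the paper (Lemmas \ref{le2.8}--\ref{le2.9} are for constant exponents), and the route you sketch — dominating the Fourier-side convolution $\widehat{u}\ast\widehat{v}$ by the Hardy--Littlewood maximal operator — does not obviously work: the domination $|f\ast k|\lesssim\|k\|_{L^{1}}\mathcal{M}f$ requires a radially decreasing integrable majorant for one factor, which a frequency-localized piece $\varphi_{j}\widehat{v}$ does not provide, while Young's inequality, which underlies the constant-exponent proofs, genuinely fails in $L^{p(\cdot)}$. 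Second, in the linear estimate you apply Theorem \ref{th1.1} in the constant scale $(s,p)=(\frac{5}{2}-\alpha,2)$; its conclusion then carries $\|u_{0}\|_{F\dot{B}^{\frac{5}{2}-\alpha}_{2,1}}$ on the right-hand side, whereas the hypotheses only give $u_{0}\in F\dot{B}^{4-\alpha-\frac{3}{p(\cdot)}}_{p(\cdot),1}$, so you silently use the embedding $F\dot{B}^{4-\alpha-\frac{3}{p(\cdot)}}_{p(\cdot),1}\hookrightarrow F\dot{B}^{\frac{5}{2}-\alpha}_{2,1}$ without proving it.

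The paper closes both gaps with a single elementary device that makes any variable-exponent paraproduct unnecessary: H\"older's inequality in variable Lebesgue spaces (Lemma \ref{le2.2}) applied on each dyadic annulus, combined with the scaling computation \eqref{eq4.7} showing that the annulus-localized weight has $L^{\frac{2p(\cdot)}{p(\cdot)-2}}$-norm $O(1)$. Since $2\leq p(\cdot)\leq\frac{6}{5-2\alpha}$, on $\operatorname{Supp}\varphi_{j}$ one can trade $L^{2}$ for $L^{p(\cdot)}$ applied to $\varphi_{j}\widehat{u}_{0}$ (estimates \eqref{eq4.5}--\eqref{eq4.7}; this is exactly the missing embedding, hence Lemma \ref{le4.1}), and, in the opposite direction, trade $L^{p(\cdot)}$ for $L^{\frac{6}{5-2\alpha}}$ applied to the localized $\mathcal{F}(u\otimes u)$ itself, yielding $\|u\otimes u\|_{\mathcal{L}^{1}_{t}(F\dot{B}^{5-\alpha-\frac{3}{p(\cdot)}}_{p(\cdot),1})}\lesssim\|u\otimes u\|_{\mathcal{L}^{1}_{t}(F\dot{B}^{\frac{5}{2}}_{\frac{6}{5-2\alpha},1})}$ as in \eqref{eq4.12}. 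After this reduction the nonlinear analysis takes place entirely in constant-exponent Fourier--Besov spaces, where Lemma \ref{le2.8} (with $p_{1}=2$, $p_{2}=\frac{3}{4-\alpha}$) and the embedding $F\dot{B}^{\frac{5}{2}-\alpha}_{2,1}\hookrightarrow F\dot{B}^{0}_{\frac{3}{4-\alpha},1}$ finish the job, as in \eqref{eq4.13}. This is the content of Remark 3.1: the constant-exponent spaces are not merely ``the spaces where the product lemmas happen to be stated,'' they are the auxiliary spaces to which the variable norms are deliberately reduced. As written, your key bilinear bound — and therefore the fixed-point argument — is unsupported.
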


\noindent\textbf{Remark 1.1}  Let us remark that, the techniques we used to prove Theorem \ref{th3.1} is different from \cite{RA19}, we shall employ the Fourier--Besov spaces with constant exponents, rather than Besov type spaces used in \cite{RA19}, to be the auxiliary spaces to find global solutions of the equations \eqref{eq3.1}. In our view, choosing the Fourier--Besov spaces with constant exponents as auxiliary spaces seems more natural way to find solutions of PDEs in the framework of variable Fourier--Besov spaces.

\noindent\textbf{Remark 1.2} Theorem \ref{th3.1} can be regarded as a generalization of global well-posed results of \cite{VZ242} to the classical Navier--Stokes equations \eqref{eq3.1}.

\subsection{The fractional Keller--Segel system}

The Cauchy problem of the fractional Keller--Segel system writes
\begin{equation}\label{eq3.4}
\begin{cases}
  \partial_{t}u+\Lambda^{\alpha}u+\nabla\cdot(u\nabla \phi)=f,\\
    -\Delta \phi=u,\\
  u(0,x)=u_0(x),
\end{cases}
\end{equation}
where $u$ and $\phi$ are two unknown functions which stand for the cell density and the concentration of the chemical attractant, respectively, $u_{0}$ and $f$ are given initial data and an external force. Notice that the chemical concentration $\phi$ can be represented as the volume potential of $u$:
\begin{equation*}
\phi(t,x)=(-\Delta)^{-1}u(t,x)=
\begin{cases}
\frac{1}{n(n-2)\omega_{n}}\int_{\mathbb{R}^{n}}\frac{u(t,y)}{|x-y|^{n-2}}dy,
\quad n\geq 3,\\
-\frac{1}{2\pi}\int_{\mathbb{R}^{2}}u(t, y)\log|x-y|dy,
\quad n=2,
\end{cases}
\end{equation*}
where $\omega_{n}$ denotes the surface area of the unit sphere in
$\mathbb{R}^{n}$.
\smallbreak

System \eqref{eq3.1} is also invariant under the following scaling: if $u$ solves \eqref{eq3.4} with initial data $u_{0}$ and $f$, so does for any $\lambda>0$ the function $u_{\lambda}:(t,x)\mapsto\lambda^{\alpha} u(\lambda^{\alpha}t,\lambda x)$ with initial data $u_{0, \lambda}$ and $f_{\lambda}$, where
\begin{equation*}
 u_{0, \lambda}: x\mapsto\lambda^{\alpha} u_{0}(\lambda x) \ \ \ \text{and}\ \ \  f_{\lambda}:(t,x)\mapsto\lambda^{2\alpha} f(\lambda^{\alpha}t,\lambda x).
\end{equation*}
A function space is called critical for the system \eqref{eq3.4} if it is invariant under the scaling
\begin{equation}\label{eq3.5}
 u_{0,\lambda}(x)=\lambda^{\alpha} u_{0}(\lambda x).
\end{equation}

The system \eqref{eq3.4} was first proposed by Escudero in \cite{E06},  where it was used to describe the spatiotemporal distribution of a population density of random walkers undergoing L\'{e}vy flights.  When $\alpha=2$, the system \eqref{eq3.4} becomes to the well-known elliptic-parabolic Keller--Segel system of chemotaxis (cf. \cite{KS70}):
\begin{equation}\label{eq3.6}
\begin{cases}
  \partial_{t}u-\Delta u=-\nabla\cdot(u\nabla \phi),\\
  -\Delta \phi=u,\\
  u(0,x)=u_0(x).
\end{cases}
\end{equation}
 It is well-known that the system \eqref{eq3.6}  admits global solutions if the initial total mass $\int_{\mathbb{R}^{n}}u_{0}(x)dx$ ($n\ge2$) is small enough, and it may blow up in finite time for large initial data, we refer the readers to see \cite{H03, H04} for a comprehensive review of these aspects. Similar to the Navier--Stokes equations \eqref{eq3.3}, a number of works have been devoted to establishing global well-posedness of the system \eqref{eq3.6} with small initial data in the scale invariant spaces. Here we list some of them as follows.
\begin{itemize}
\item Corrias--Perthame--Zaag \cite{CPZ04} proved global existence of solutions with initial data $ u_{0}\in L^{1} (\mathbb{R}^{n})\cap L^{\frac{n}{2}}(\mathbb{R}^{n})$ and
$\| u_{0}\|_{L^{\frac{n}{2}}}$ being sufficiently small, see also Kozono--Sugiyama \cite{KS08}.

\item Biler et al. \cite{BCGK04} established global well-posedness with small initial data in critical pseudomeasure space $\mathcal{PM}^{n-2}(\mathbb{R}^{n})$.

 \item In dimension 2, Ogawa--Shimizu \cite{OS08, OS10} established global well-posedness with small initial data in critical Hardy space $\mathcal{H}^{1}(\mathbb{R}^{2})$ and Besov space $\dot{B}^{0}_{1,2}(\mathbb{R}^{2})$, respectively;

\item Karch \cite{K99} and Iwabuchi \cite{I11} proved the global well-posedness with small initial data in the critical Besov spaces $\dot{B}^{-2+\frac{n}{p}}_{p,q}(\mathbb{R}^{n})$ with $1\leq p<\infty$ and $1\leq q\leq\infty$, and the ill-posedness is also established in critical Besov type spaces $\dot{\mathcal{B}}^{-2}_{\infty,q}(\mathbb{R}^{n})$ with  $2<q\leq\infty$.

\item Iwabuchi--Nakamura \cite{IN13} showed global existence of solutions with small initial data in $BMO^{-2}$ through the Triebel--Lizorkin spaces $\dot{F}^{-2}_{\infty,2}(\mathbb{R}^{n})$.

\item For more well-posed and ill-posed results to more general chemotaxis models, we refer the readers to see \cite{LW23, LYZ23, NY20, NY22, XF22, Z24}.
\end{itemize}
 \smallbreak

For the general fractional diffusion case $1<\alpha<2$,  we know from \cite{E06, OY01} that the one dimensional system \eqref{eq3.4} possesses global solution not only in the case $\alpha=2$ but also in the case $1<\alpha<2$. However, a singularity appears in finite time when $0<\alpha<1$ if the
initial configuration of cells is sufficiently concentrated, see Bournaveas--Calvez \cite{BC10}. Moreover, when $n\ge2$, the solutions of \eqref{eq3.4} globally exist for small initial data and may blow up in finite time for large initial data, here we list some results concerning about the global existence or blow-up of solutions.
\begin{itemize}
\item For $1<\alpha<2$, Biler--Karch \cite{BK10}  established global well-posedness with small initial data in the critical Lebesgue space $L^{\frac{n}{\alpha}}(\mathbb{R}^{n})$, they also proved the finite time blowup of nonnegative solutions with initial data imposed on large mass or high concentration conditions.

 \item For $1<\alpha<2$, Biler--Wu \cite{BW09} established  global well-posedness with small initial data in  critical  Besov space $\dot{B}^{1-\alpha}_{2,q}(\mathbb{R}^{2})$.

\item For $1<\alpha\leq2$, Zhai \cite{Z10} proved global existence, uniqueness and stability of solutions in critical Besov space with general potential type nonlinear term.

\item For $1<\alpha \leq 2$, Wu--Zheng \cite{WZ11} proved global well-posedness with small initial data in critical Fourier--Herz space $\mathcal{\dot{B}}^{2-2\alpha}_{q}(\mathbb{R}^{n})$ for $1<\alpha\leq2$ and $2\leq q\leq \infty$, and they also proved  that system \eqref{eq3.4} is  ill-posedness in  $\mathcal{\dot{B}}^{-2}_{q}(\mathbb{R}^{n})$ and  $\dot{B}^{-2}_{\infty, q}(\mathbb{R}^{n})$ with $\alpha=2$ and $2<q\leq\infty$.

\item For $0<\alpha\leq 1$, Sugiyama--Yamamoto--Kato \cite{SYK15} proved local existence with large data and global existence with small data in critical Besov space $\dot{B}^{-\alpha+\frac{n}{p}}_{p,q}(\mathbb{R}^{n})$ for $n\geq 3$, $2\leq p<n$ and  $1\leq q<2$.

 \item For $1<\alpha\leq 2$, Zhao \cite{Z18} showed global existence and analyticity of solutions with small data in critical Besov space $\dot{B}^{-\alpha+\frac{n}{p}}_{p,q}(\mathbb{R}^{n})$ ($1\leq p<\infty$,  $1\leq q\leq \infty$) and $\dot{B}^{-\alpha}_{\infty,1}(\mathbb{R}^{n})$. For the limit case $\alpha=1$, the global existence and analyticity of solutions with small data in critical Besov space $\dot{B}^{-1+\frac{n}{p}}_{p,1}(\mathbb{R}^{n})$ ($1\leq p<\infty$) and $\dot{B}^{-1}_{\infty,1}(\mathbb{R}^{n})$ were also obtained.

 \item  Parts of these results were also extended for the other generalized chemotaxis models and the fractional power drift-diffusion system of bipolar type,  we refer the readers to see \cite{CLW19, DL17, MYZ08, SYK15, YKS14, Z21} for more details.
 \end{itemize}
\smallbreak

Recently, in \cite{VZ241, VZ242}, the authors in this paper established some well-posed results in variable function spaces for the system \eqref{eq3.4} and \eqref{eq3.6}. Indeed, in \cite{VZ241}, by carefully examining the algebraical structure, we first reduce the system \eqref{eq3.4} into the fractional nonlinear heat equation to overcome the difficulties caused by the boundedness of the Riesz potential in a variable Lebesgue spaces, then by mixing some structural properties of the variable Lebesgue spaces with the optimal decay estimates of the fractional heat kernel, we obtained global existence of solutions of the Keller--Segel system \eqref{eq3.4} in the variable Lebesgue spaces $L^{p(\cdot)}(\mathbb{R}^{n})\cap L^{\frac{n}{\alpha}}(\mathbb{R}^{n})$. In \cite{VZ242},  by fully using the linear estimates of the heat equation in variable Fourier--Besov spaces and some of the main properties of these functional spaces, we proved global well-posedness of the 3D Keller--Segel system \eqref{eq3.6} in the framework of variable Fourier--Besov spaces
$F\dot{B}^{1-\frac{3}{p(\cdot)}}_{p(\cdot),1}(\mathbb{R}^{3})$.
 \smallbreak

Based on the above well-posed results in variable function spaces, the second  goal of this paper is to prove the global well-posedness of the 3D fractional Keller--Segel system \eqref{eq3.4} in the framework of the Fourier--Besov spaces with variable regularity and integrability indices. The main result is as follows.
\begin{theorem}\label{th3.2}
Let $n=3$, $1<\alpha\leq 2$, $p(\cdot)\in \mathcal{P}^{\log}_{0}(\mathbb{R}^{3})$ with $2\leq p^{-}\leq p(\cdot) \leq p^{+}\leq \frac{6}{5-2\alpha}$, $1\leq \rho\leq+\infty$. For any $u_{0}\in F\dot{B}^{3-\alpha-\frac{3}{p(\cdot)}}_{p(\cdot),1}(\mathbb{R}^{3})$ and $f\in\mathcal{L}^{1}(0,\infty; F\dot{B}^{3-\alpha-\frac{3}{p(\cdot)}}_{p(\cdot),1}(\mathbb{R}^{3}))\cap \mathcal{L}^{1}(0,\infty; F\dot{B}^{\frac{3}{2}-\alpha}_{2,1}(\mathbb{R}^{3}))$, there exists a positive constant $\varepsilon$ such that if the initial data $u_{0}$ and $f$ satisfy
\begin{equation*}
\|u_{0}\|_{F\dot{B}^{3-\alpha-\frac{3}{p(\cdot)}}_{p(\cdot),1}}+\|f\|_{\mathcal{L}^{1}_{t}(F\dot{B}^{3-\alpha-\frac{3}{p(\cdot)}}_{p(\cdot),1})}
+\|f\|_{\mathcal{L}^{1}_{t}(F\dot{B}^{\frac{3}{2}-\alpha}_{2,1})}\leq \varepsilon,
\end{equation*}
then the fractional Keller--Segel system \eqref{eq3.4} admits a unique global solution $u$ such that
\begin{equation*}
u\in \mathcal{L}^{\rho}(0,\infty; F\dot{B}^{3-\alpha-\frac{3}{p(\cdot)}+\frac{\alpha}{\rho}}_{p(\cdot),1}(\mathbb{R}^{3}))\cap \mathcal{L}^{1}(0,\infty; F\dot{B}^{\frac{3}{2}}_{2,1}(\mathbb{R}^{3}))\cap \mathcal{L}^{\infty}(0,\infty; F\dot{B}^{\frac{3}{2}-\alpha}_{2,1}(\mathbb{R}^{3})).
\end{equation*}
\end{theorem}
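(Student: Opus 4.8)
The plan is to recast \eqref{eq3.4} as a fixed-point problem and invoke Proposition \ref{pro2.12}. Since $\phi=(-\Delta)^{-1}u$, the drift term equals $\nabla\cdot\big(u\,\nabla(-\Delta)^{-1}u\big)$, so Duhamel's formula for the fractional heat semigroup recasts \eqref{eq3.4} as $u=U_{0}+\mathcal{B}(u,u)$, where $U_{0}(t)=e^{-t\Lambda^{\alpha}}u_{0}+\int_{0}^{t}e^{-(t-\tau)\Lambda^{\alpha}}f\,d\tau$ and
\[
 \mathcal{B}(u,v)(t)=-\int_{0}^{t}e^{-(t-\tau)\Lambda^{\alpha}}\nabla\cdot\big(u\,\nabla(-\Delta)^{-1}v\big)\,d\tau .
\]
Following Remark 3.1, I would take $\mathcal{X}$ to be the intersection appearing in the statement, normed by the sum of its three norms, and treat the two constant-exponent components $\mathcal{L}^{1}_{t}(F\dot{B}^{3/2}_{2,1})\cap\mathcal{L}^{\infty}_{t}(F\dot{B}^{3/2-\alpha}_{2,1})$ as the auxiliary spaces in which the nonlinear interaction is actually closed. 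The argument runs in close parallel to the proof of Theorem \ref{th3.1}; the only structural change is that the Leray projection of $\nabla\cdot(u\otimes u)$ is replaced by $\nabla\cdot(u\nabla(-\Delta)^{-1}v)$, which is one derivative smoother and accounts for the shift from $4-\alpha-\frac{3}{p(\cdot)}$ to $3-\alpha-\frac{3}{p(\cdot)}$.

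For the linear part I would apply Theorem \ref{th1.1} componentwise. For the variable component I use it directly with $s(\cdot)=3-\alpha-\frac{3}{p(\cdot)}$, $r=1$ and forcing index $\rho=1$, which bounds $\|U_{0}\|_{\mathcal{L}^{\rho}_{t}(F\dot{B}^{3-\alpha-3/p(\cdot)+\alpha/\rho}_{p(\cdot),1})}$ by $\|u_{0}\|_{F\dot{B}^{3-\alpha-3/p(\cdot)}_{p(\cdot),1}}+\|f\|_{\mathcal{L}^{1}_{t}(F\dot{B}^{3-\alpha-3/p(\cdot)}_{p(\cdot),1})}$ simultaneously for every $\rho\in[1,\infty]$. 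For the two $p=2$ components I first invoke the Sobolev-type embedding $F\dot{B}^{3-\alpha-3/p(\cdot)}_{p(\cdot),1}\hookrightarrow F\dot{B}^{3/2-\alpha}_{2,1}$, which costs exactly the $3(\tfrac12-\tfrac{1}{p(\cdot)})$ derivatives supplied by Hölder's inequality on each dyadic shell together with the log-Hölder condition; Theorem \ref{th1.1} with $p=2$, $s=\tfrac32-\alpha$ then controls these components by $\|u_{0}\|_{F\dot{B}^{3-\alpha-3/p(\cdot)}_{p(\cdot),1}}+\|f\|_{\mathcal{L}^{1}_{t}(F\dot{B}^{3/2-\alpha}_{2,1})}$. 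Altogether $\|U_{0}\|_{\mathcal{X}}\lesssim\varepsilon$.

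The bilinear estimate is obtained, for each target space, by applying Theorem \ref{th1.1} to $\mathcal{B}(u,v)$ viewed as the solution of \eqref{eq1.1} with right-hand side $-\nabla\cdot(u\nabla(-\Delta)^{-1}v)$, and using Lemma \ref{le2.7}(i) to absorb the outer divergence and the order $-1$ operator $\nabla(-\Delta)^{-1}$ (whose symbol $i\xi/|\xi|^{2}$ has modulus $|\xi|^{-1}$ on each shell). In the $p=2$ components this reduces to showing $\|uw\|_{\mathcal{L}^{1}_{t}(F\dot{B}^{5/2-\alpha}_{2,1})}\lesssim\|u\|_{\mathcal{X}}\|v\|_{\mathcal{X}}$ with $w=\nabla(-\Delta)^{-1}v$; splitting the time integrability as $\mathcal{L}^{1}\cdot\mathcal{L}^{\infty}$ and applying Lemma \ref{le2.9} with $p_{1}=p_{2}=2$, $s_{1}=\tfrac32$, $s_{2}=\tfrac52-\alpha$ (whose hypotheses hold precisely because $1<\alpha\le2$) closes this part, taking $u\in\mathcal{L}^{1}_{t}(F\dot{B}^{3/2}_{2,1})$ and $w\in\mathcal{L}^{\infty}_{t}(F\dot{B}^{5/2-\alpha}_{2,1})$.

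The genuine obstacle is the variable component, where one must bound $\|uw\|_{F\dot{B}^{4-\alpha-3/p(\cdot)}_{p(\cdot),1}}$, that is, a product estimate ending in a variable-integrability space. Because $L^{p(\cdot)}$ is neither translation invariant nor stable under convolution, the classical paraproduct/Young argument is unavailable and must be replaced: after a Bony decomposition on the Fourier side, each low--high block is a convolution of a fixed integrable low-frequency profile against a piece in $L^{p(\cdot)}$, which I would control by dominating the convolution pointwise by the Hardy--Littlewood maximal function and invoking Lemma \ref{le2.4}, valid exactly because $p(\cdot)\in\mathcal{P}^{\log}_{0}(\mathbb{R}^{3})$. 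The constraints $2\le p^{-}\le p^{+}\le\frac{6}{5-2\alpha}$ are what render the resulting indices admissible (they force $4-\alpha-\frac{3}{p(\cdot)}\le\frac32$ and keep the relevant indices positive) and allow one factor to be measured in the variable norm and the other in the auxiliary $p=2$ norm, so the bound is $\lesssim\|u\|_{\mathcal{X}}\|v\|_{\mathcal{X}}$. With the linear and bilinear bounds in hand, Proposition \ref{pro2.12} yields, for $\varepsilon$ small, a unique small $u\in\mathcal{X}$; the freedom of $\rho_{1}$ in Theorem \ref{th1.1} then promotes this to $\mathcal{L}^{\rho}_{t}(F\dot{B}^{3-\alpha-3/p(\cdot)+\alpha/\rho}_{p(\cdot),1})$ for every $\rho\in[1,\infty]$, completing the proof.
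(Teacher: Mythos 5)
Your skeleton --- the Duhamel reformulation with $\mathcal{B}_{2}(u,v)$, the triple-intersection solution space, Theorem \ref{th1.1} for both linear terms, the shell-wise H\"older embedding $F\dot{B}^{3-\alpha-\frac{3}{p(\cdot)}}_{p(\cdot),1}\hookrightarrow F\dot{B}^{\frac{3}{2}-\alpha}_{2,1}$ for the constant-exponent linear bounds, and Lemma \ref{le2.9} with $s_{1}=\frac32$, $s_{2}=\frac52-\alpha$, $p_{1}=p_{2}=2$ for the $p=2$ part of the bilinear estimate --- coincides with the paper's proof (Lemmas \ref{le5.1}--\ref{le5.3}). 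But there is a genuine gap exactly where you locate the ``genuine obstacle'': the variable-exponent component of the bilinear estimate. Your mechanism is a Bony decomposition on the Fourier side with each block dominated pointwise by the Hardy--Littlewood maximal function, then Lemma \ref{le2.4}. This does not work as stated: the pointwise bound $|h\ast g(x)|\lesssim \|h\|_{L^{1}}\mathcal{M}g(x)$ requires $h$ to be dominated by a \emph{radially decreasing} integrable kernel, and in a low--high block the role of $h$ is played by $\widehat{S_{j-1}u}$ (or $\widehat{S_{j-1}\nabla(-\Delta)^{-1}v}$), which is not a ``fixed integrable low-frequency profile'' at all --- it depends on the unknown solution and has no monotonicity --- so the maximal-function domination cannot be invoked, and the variable-exponent Young inequality it is meant to replace is genuinely false (as the paper recalls in Section 2.1). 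Your sketch also never specifies in which norms the two factors are measured, so the estimate is not actually closed.

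The paper avoids proving any product estimate in a variable space. In \eqref{eq5.9} it uses that $\varphi_{j}\mathcal{F}\bigl(u\nabla(-\Delta)^{-1}u\bigr)$ is supported on the shell $|\xi|\sim 2^{j}$ together with $2\leq p(\cdot)\leq\frac{6}{5-2\alpha}$ to apply H\"older's inequality (Lemma \ref{le2.2}) on each shell against the weight $2^{j(\frac52-\alpha-\frac{3}{p(\cdot)})}\varphi_{j}$, whose variable Lebesgue norm is bounded uniformly in $j$ by the rescaling argument \eqref{eq4.7} (this is where log-H\"older continuity enters); this reduces the variable norm $\mathcal{L}^{1}_{t}(F\dot{B}^{4-\alpha-\frac{3}{p(\cdot)}}_{p(\cdot),1})$ of the nonlinearity to the constant-exponent norm $\mathcal{L}^{1}_{t}(F\dot{B}^{\frac32}_{\frac{6}{5-2\alpha},1})$. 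Estimating that constant-exponent norm then needs a second idea you do not mention: the divergence structure $u\nabla(-\Delta)^{-1}u=-\nabla\cdot\bigl(\nabla(-\Delta)^{-1}u\otimes\nabla(-\Delta)^{-1}u-\frac12|\nabla(-\Delta)^{-1}u|^{2}I\bigr)$, which raises the regularity index to $\frac52>0$ so that Lemma \ref{le2.8} (requiring $s>0$ and $1+\frac1p=\frac1{p_{1}}+\frac1{p_{2}}$, here with $p=\frac{6}{5-2\alpha}$, $p_{1}=2$, $p_{2}=\frac{3}{4-\alpha}$) applies, followed by the embedding $F\dot{B}^{\frac52-\alpha}_{2,1}\hookrightarrow F\dot{B}^{0}_{\frac{3}{4-\alpha},1}$; note Lemma \ref{le2.9} cannot substitute here, since its output integrability equals that of the first factor and there is no embedding from $p=2$ into $p=\frac{6}{5-2\alpha}\geq 2$ in Fourier--Besov spaces. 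This is how \eqref{eq5.10}--\eqref{eq5.11} close the variable component by $\|u\|_{\mathcal{L}^{1}_{t}(F\dot{B}^{\frac32}_{2,1})}\|u\|_{\mathcal{L}^{\infty}_{t}(F\dot{B}^{\frac32-\alpha}_{2,1})}$; to repair your proof you should replace your paraproduct-plus-maximal-function step by this shell-wise H\"older reduction and the symmetric-structure rewriting.
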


\noindent\textbf{Remark 1.3} Theorem \ref{th3.2} can be regarded as a generalization of global well-posed results of \cite{VZ242} to the fractional Keller--Segel system \eqref{eq3.4}.
 \smallbreak

The structure of this paper is arranged as follows. In Section 2, we introduce some conventions and notations, and state the definition and some basic results of the homogeneous (variable) Fourier--Besov spaces. In Section 3, we present the proof of Theorem \ref{th3.1}, and in Section 4, we complete the proof of Theorem \ref{th3.2}.

\section{Preliminaries}
In this section, we introduce some conventions and notations, and state some basic results of variable function spaces.  Let $\mathcal{S}(\mathbb{R}^n)$ be the Schwartz class of rapidly decreasing functions on $\mathbb{R}^n$, and $\mathcal{S}'(\mathbb{R}^n)$ the space of tempered
distributions.  Given  $f\in\mathcal{S}(\mathbb{R}^{n})$, the Fourier transform $\mathcal{F}(f)$ (or $\widehat{f}$) is defined by
$$
  \mathcal{F}(f)(\xi)=\widehat{f}(\xi):=\frac{1}{(2\pi)^{\frac{n}{2}}}\int_{\mathbb{R}^{n}}f(x)e^{-ix\cdot\xi}dx.
$$
More generally, the Fourier transform of  a tempered distribution $f\in\mathcal{S}'(\mathbb{R}^{n})$ is defined by the dual argument in the standard way. Throughout this paper,  the letters $C$ and $C_{i}$ ($i=1,2, \cdots$) stand for the generic harmless constants, whose meaning is clear from the context. For brevity, we shall use the notation
$u\lesssim v$ instead of $u\leq Cv$, and $u\approx v$ means that $u\lesssim v$
and $v\lesssim u$.

 \subsection{Variable Lebesgue spaces}
 We begin with a fundamental definition.  Let $\mathcal{P}_{0}(\mathbb{R}^{n})$ be the set of all Lebesgue measurable functions $p(\cdot):\mathbb{R}^{n}\rightarrow[1,+\infty)$ such that
\begin{equation*}
1< p^{-}:=\operatorname{essinf}_{x\in\mathbb{R}^{n}}p(x), \ \  p^{+}:=\operatorname{esssup}_{x\in\mathbb{R}^{n}}p(x)<+\infty.
\end{equation*}
For any $p(\cdot)\in \mathcal{P}_{0}(\mathbb{R}^{n})$,  a measurable function $f$, we denote
\begin{equation}\label{eq2.1}
   \|f\|_{L^{p(\cdot)}}:=\inf\left\{\lambda>0:  \rho_{p(\cdot)}\left(\frac{f}{\lambda}\right)\leq 1\right\},
\end{equation}
where the modular function $\rho_{p(\cdot)}$ associated with $p(\cdot)$ is given by
\begin{equation*}
   \rho_{p(\cdot)}(f):=\int_{\mathbb{R}^{n}}|f(x)|^{p(x)}dx.
\end{equation*}
If the set on the right-hand side of  \eqref{eq2.1} is empty then we denote $\|f\|_{L^{p(\cdot)}}=\infty$. Now let us recall the definition of the Lebesgue spaces $L^{p(\cdot)}(\mathbb{R}^{n})$ with variable exponent.

\begin{definition}\label{de2.1}
Given $p(\cdot)\in \mathcal{P}_{0}(\mathbb{R}^{n})$, we define the variable Lebesgue spaces $L^{p(\cdot)}(\mathbb{R}^{n})$ to be the set of Lebesgue measurable functions $f$ such that   $\|f\|_{L^{p(\cdot)}}<+\infty$.
\end{definition}

The variable Lebesgue spaces $L^{p(\cdot)}(\mathbb{R}^{n})$ retain some good properties of the usual Lebesgue spaces, such as $L^{p(\cdot)}(\mathbb{R}^{n})$ is a Banach space associated with the norm $\|\cdot\|_{L^{p(\cdot)}}$, and we have the following  H\"{o}lder's inequality (cf. \cite{CF13}, Corollary 2.28; \cite{DHHR11}, Lemma 3.2.20). For a complete  presentation of the theory of a  variable Lebesgue spaces, we refer to books \cite{CF13, DHHR11}.
\smallbreak

\begin{lemma}\label{le2.2}
Given two exponent functions $p_{1}(\cdot), p_{2}(\cdot)\in \mathcal{P}_{0}(\Omega)$, define $p(\cdot)\in \mathcal{P}_{0}(\Omega)$ by $\frac{1}{p(x)}=\frac{1}{p_{1}(x)}+\frac{1}{p_{2}(x)}$. Then there exists a constant $C$ such that for all $f\in L^{p_{1}(\cdot)}(\Omega)$ and $g\in L^{p_{2}(\cdot)}(\Omega)$, we have $fg\in L^{p(\cdot)}(\Omega)$ and
\begin{equation}\label{eq2.2}
   \|fg\|_{L^{p(\cdot)}}\leq C \|f\|_{L^{p_{1}(\cdot)}}\|g\|_{L^{p_{2}(\cdot)}}.
\end{equation}
\end{lemma}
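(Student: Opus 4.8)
The plan is to prove \eqref{eq2.2} by the modular method, reducing the norm inequality to a pointwise application of the classical Young inequality. By the positive homogeneity of the Luxemburg norm \eqref{eq2.1} I would first normalize, assuming without loss of generality that $\|f\|_{L^{p_1(\cdot)}}=\|g\|_{L^{p_2(\cdot)}}=1$, the cases in which either norm vanishes or is infinite being trivial. The unit-ball property of variable Lebesgue spaces --- that $\|h\|_{L^{p(\cdot)}}\le 1$ forces $\rho_{p(\cdot)}(h)\le 1$ --- then yields the modular bounds $\rho_{p_1(\cdot)}(f)\le 1$ and $\rho_{p_2(\cdot)}(g)\le 1$, which are the quantities I will actually control.

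The decisive pointwise estimate is as follows. Since $\frac{1}{p(x)}=\frac{1}{p_1(x)}+\frac{1}{p_2(x)}$, the exponents $\frac{p_1(x)}{p(x)}$ and $\frac{p_2(x)}{p(x)}$ are conjugate for a.e. $x$, as $\frac{p(x)}{p_1(x)}+\frac{p(x)}{p_2(x)}=1$. Applying the scalar Young inequality $st\le \frac{s^{r}}{r}+\frac{t^{r'}}{r'}$ with $r=\frac{p_1(x)}{p(x)}$, $s=|f(x)|^{p(x)}$ and $t=|g(x)|^{p(x)}$ gives, for a.e. $x$,
\begin{equation*}
  |f(x)g(x)|^{p(x)}\le \frac{p(x)}{p_1(x)}|f(x)|^{p_1(x)}+\frac{p(x)}{p_2(x)}|g(x)|^{p_2(x)}.
\end{equation*}
Integrating over $\Omega$ and using $p(x)\le p_1(x)$ and $p(x)\le p_2(x)$, so that both coefficients are at most $1$, I obtain the modular inequality $\rho_{p(\cdot)}(fg)\le \rho_{p_1(\cdot)}(f)+\rho_{p_2(\cdot)}(g)\le 2$.

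It remains to convert this modular bound into a bound on the norm. For any $\lambda\ge 1$ one has $\rho_{p(\cdot)}(fg/\lambda)\le \lambda^{-p^{-}}\rho_{p(\cdot)}(fg)$, since $|fg/\lambda|^{p(x)}=\lambda^{-p(x)}|fg|^{p(x)}\le \lambda^{-p^{-}}|fg|^{p(x)}$ pointwise; taking $\lambda=2^{1/p^{-}}$ gives $\rho_{p(\cdot)}(fg/\lambda)\le \tfrac12\cdot 2=1$, whence $\|fg\|_{L^{p(\cdot)}}\le 2^{1/p^{-}}$ directly from \eqref{eq2.1}. Undoing the normalization produces \eqref{eq2.2} with the explicit constant $C=2^{1/p^{-}}$.

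The main obstacle, and the only genuinely variable feature of the argument, is precisely this last passage from modular to norm. In the constant-exponent case the identity $\rho_{p}(h)=\|h\|_{p}^{p}$ makes the two notions interchangeable and forces the sharp constant $1$; here the modular is inhomogeneous, so the integrated Young estimate only delivers $\rho_{p(\cdot)}(fg)\le 2$ instead of $\le 1$, and one must invoke the norm--modular correspondence and absorb the excess into $C$. The argument uses nothing beyond the boundedness of the exponents and requires no regularity hypothesis such as log-H\"older continuity; an alternative route via the associate-space duality $\|h\|_{L^{p(\cdot)}}\approx\sup_{\|k\|_{L^{p'(\cdot)}}\le1}\int_{\Omega} hk\,dx$ is available but presupposes the norm-conjugate formula, heavier machinery than the self-contained modular computation above.
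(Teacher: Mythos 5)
The paper never proves Lemma \ref{le2.2} at all: it simply quotes the result from \cite{CF13} (Corollary 2.28) and \cite{DHHR11} (Lemma 3.2.20), so there is no internal proof to compare against. Your argument is correct and is essentially the standard modular proof given in those cited references: normalization via homogeneity, the unit-ball property, the pointwise Young inequality for the a.e.-conjugate pair $\frac{p_1(x)}{p(x)},\frac{p_2(x)}{p(x)}$ yielding $\rho_{p(\cdot)}(fg)\le\rho_{p_1(\cdot)}(f)+\rho_{p_2(\cdot)}(g)\le 2$, and the rescaling $\lambda=2^{1/p^{-}}$ to pass from the modular bound back to the Luxemburg norm, giving the explicit constant $C=2^{1/p^{-}}$. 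Your closing observation is also accurate and worth retaining: the inequality needs only $1<p^{-}\le p^{+}<\infty$ (guaranteed here by the hypothesis $p(\cdot)\in\mathcal{P}_{0}(\Omega)$) and no log-H\"older continuity, in contrast to the maximal-function and Riesz-transform bounds of Lemma \ref{le2.4}.
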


In order to tackle with the boundedness of many classical operators appeared in the mathematical analysis of PDEs, a classical approach is to consider some constraints on the variable exponent, and the most common one is given by the so-called \textit{log-H\"{o}lder continuity condition}.
\begin{definition}\label{de2.3}
Let $p(\cdot)\in \mathcal{P}_{0}(\mathbb{R}^{n})$ such that  there exists a limit $\frac{1}{p_{\infty}}=\lim_{|x|\rightarrow\infty}\frac{1}{p(x)}$.
\begin{itemize}
\item We say that $p(\cdot)$ is locally log-H\"{o}lder continuous if for all $x,y\in \mathbb{R}^{n}$,  there exists a constant $C$ such that $\big{|}\frac{1}{p(x)}-\frac{1}{p(y)}\big{|}\leq \frac{C}{\log(e+\frac{1}{|x-y|})}$;
\item We say that $p(\cdot)$ satisfies the log-H\"{o}lder decay condition if for all $x\in \mathbb{R}^{n}$,  there exists a constant $C$ such that $\big{|}\frac{1}{p(x)}-\frac{1}{p_{\infty}}\big{|}\leq \frac{C}{\log(e+|x|)}$;
\item We say that  $p(\cdot)$ is globally log-H\"{o}lder continuous in $\mathbb{R}^n$ if it is locally \textit{log-H\"{o}lder continuous} and satisfies the \textit{log-H\"{o}lder decay condition};
\item We define the class of variable exponents $\mathcal{P}_{0}^{\log}(\mathbb{R}^{n})$ as
\begin{equation*}
 \mathcal{P}_{0}^{\log}(\mathbb{R}^{n}):=\left\{p(\cdot)\in\mathcal{P}(\mathbb{R}^{n}):\ \  p(\cdot)\ \text{is globally log-H\"{o}lder continuous in}\  \mathbb{R}^n\right\}.
\end{equation*}
\end{itemize}
\end{definition}

For any $p(\cdot)\in \mathcal{P}_{0}^{\log}(\mathbb{R}^{n})$, we have the following results in terms of the Hardy--Littlewood maximal function and  Riesz transforms (cf. \cite{CF13}, Theorem 3.16 and Theorem 5.42;\cite{DHHR11}, Theorem 4.3.8 and Corollary 6.3.10).

\begin{lemma}\label{le2.4}
Let $p(\cdot)\in \mathcal{P}^{\log}_{0}(\mathbb{R}^{n})$ with $1<p^{-}\leq p^{+}<+\infty$. Then  for any $f\in L^{p(\cdot)}(\mathbb{R}^{n})$, there exists a positive constant $C$ such that
\begin{equation}\label{eq2.3}
   \|\mathcal{M} (f)\|_{L^{p(\cdot)}}\leq C\|f\|_{L^{p(\cdot)}},
\end{equation}
where $\mathcal{M}$ is the Hardy--Littlewood maximal function defined by
\begin{equation*}
  \mathcal{M}(f)(x):=\sup_{x\in B}\frac{1}{|B|}\int_{B}|f(y)|dy,
\end{equation*}
and $B\subset \mathbb{R}^{n}$ is an open ball with center $x$. Furthermore,
\begin{equation}\label{eq2.4}
   \|\mathcal{R}_{j}(f)\|_{L^{p(\cdot)}}\leq C\|f\|_{L^{p(\cdot)}} \ \ \text{for any}\ \ 1\leq j\leq n,
\end{equation}
where $\mathcal{R}_{j}$ ($1\leq j\leq n$) are the usual Riesz transforms, i.e., $\mathcal{F}\left(\mathcal{R}_{j}f\right)(\xi)=-\frac{i\xi_{j}}{|\xi|}\mathcal{F}(f)(\xi)$.
\end{lemma}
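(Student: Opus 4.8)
The lemma bundles two classical facts about variable Lebesgue spaces, and I would prove them in the order stated: the maximal estimate \eqref{eq2.3} is the substantial one (it is Diening's theorem), while the Riesz bound \eqref{eq2.4} can be transferred from it. For \eqref{eq2.3} the plan is to replace the norm inequality by the associated \emph{modular} inequality and then pass back using the homogeneity of $\|\cdot\|_{L^{p(\cdot)}}$: after normalising so that $\|f\|_{L^{p(\cdot)}}\le 1$, i.e.\ $\rho_{p(\cdot)}(f)\le 1$, it suffices to produce a fixed $C$ with $\int_{\mathbb{R}^{n}}(\mathcal{M}f(x)/C)^{p(x)}\,dx\le 1$. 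To make the eventual integration feasible I would immediately rescale the density: setting $g:=|f|^{p(\cdot)/p^{-}}$ (so that $\int g^{p^{-}}=\rho_{p(\cdot)}(f)\le 1$, whence $g\in L^{p^{-}}$), the goal becomes the pointwise bound
\[
(\mathcal{M}f(x))^{p(x)}\;\lesssim\;\bigl(\mathcal{M}g(x)\bigr)^{p^{-}}+G(x),\qquad G(x):=(e+|x|)^{-np^{-}} .
\]

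Deriving this pointwise estimate is where both halves of Definition \ref{de2.3} are used. Fix a ball $B\ni x$ and write $|f|=g^{p^{-}/p(\cdot)}$; since $p(y)\ge p^{-}$ the map $t\mapsto t^{p^{-}/p(x)}$ is concave, so Jensen's inequality controls $\frac{1}{|B|}\int_{B}|f|$ by $\bigl(\frac{1}{|B|}\int_{B}g\bigr)^{p^{-}/p(x)}\le(\mathcal{M}g(x))^{p^{-}/p(x)}$, provided one may treat $p$ as essentially constant on $B$. That approximation is exactly what the log-H\"older conditions furnish: on small balls the local log-H\"older continuity keeps the oscillation of $p$ harmless, in that $|B|^{p(x)-p(y)}$ stays bounded, so the averages with exponents $p(x)$ and $p(y)$ are comparable up to a universal constant; on large balls and near infinity the decay condition pins $p$ to the limit exponent $p_{\infty}$ and produces only the integrable tail $G$. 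Taking the supremum over $B\ni x$ and raising to the power $p(x)$ yields the displayed estimate.

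It then remains to integrate. The term $G$ is harmless because $np^{-}>n$ gives $\int_{\mathbb{R}^{n}}G<\infty$, and the first term is precisely where the hypothesis $p^{-}>1$ is indispensable: since $p^{-}>1$ the maximal operator is bounded on the \emph{fixed} space $L^{p^{-}}$, so $\int_{\mathbb{R}^{n}}(\mathcal{M}g)^{p^{-}}\le C\int_{\mathbb{R}^{n}}g^{p^{-}}=C\,\rho_{p(\cdot)}(f)\le C$. This closes the modular inequality and hence gives \eqref{eq2.3}. For the Riesz transforms I would not reprove a singular-integral bound but transfer the classical weighted estimates. Each $\mathcal{R}_{j}$ is a Calder\'on--Zygmund operator, so $\|\mathcal{R}_{j}f\|_{L^{p_{0}}(w)}\le C\|f\|_{L^{p_{0}}(w)}$ holds for every Muckenhoupt weight $w\in A_{p_{0}}$; because $p(\cdot)\in\mathcal{P}_{0}^{\log}(\mathbb{R}^{n})$ forces the dual exponent $p'(\cdot)$ into $\mathcal{P}_{0}^{\log}(\mathbb{R}^{n})$ as well, the estimate \eqref{eq2.3} holds for both $p(\cdot)$ and $p'(\cdot)$, which is exactly the input required by the Rubio de Francia extrapolation theorem in the variable-exponent setting. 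Applying that theorem converts the $A_{p_{0}}$ bounds for $\mathcal{R}_{j}$ into \eqref{eq2.4}; alternatively one may use the Coifman--Fefferman (or sparse) domination of $\mathcal{R}_{j}$ by $\mathcal{M}$ to deduce \eqref{eq2.4} directly from \eqref{eq2.3}.

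The main obstacle is unambiguously the pointwise estimate for $\mathcal{M}f$ and its integration. Reconciling the small-ball regime, governed by the local regularity of $p$, with the large-scale regime, governed by the decay to $p_{\infty}$, and then routing the rescaled density through the fixed-exponent maximal inequality, is the entire technical content of Diening's theorem and is carried out in detail in the cited references \cite{CF13,DHHR11}; once \eqref{eq2.3} is in hand, the Riesz-transform part is comparatively routine bookkeeping via extrapolation.
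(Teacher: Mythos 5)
This lemma is not proved in the paper at all: it is stated as a known result with the proof delegated to the cited references (\cite{CF13}, Theorems 3.16 and 5.42; \cite{DHHR11}, Theorem 4.3.8 and Corollary 6.3.10), and your sketch is a faithful outline of exactly those proofs --- normalization to a modular inequality, Diening's key pointwise estimate $(\mathcal{M}f(x))^{p(x)}\lesssim(\mathcal{M}g(x))^{p^{-}}+(e+|x|)^{-np^{-}}$ with $g=|f|^{p(\cdot)/p^{-}}$ exploiting both halves of the log-H\"older hypothesis, integration via the classical $L^{p^{-}}$ maximal theorem (where $p^{-}>1$ enters), and then Rubio de Francia extrapolation for $\mathcal{R}_{j}$ using that $p'(\cdot)$ inherits the $\mathcal{P}_{0}^{\log}$ property. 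The one imprecision is your parenthetical alternative for \eqref{eq2.4}: there is no pointwise Coifman--Fefferman type domination of $\mathcal{R}_{j}$ by $\mathcal{M}$, and sparse domination still requires boundedness of $\mathcal{M}$ on both $L^{p(\cdot)}$ and $L^{p'(\cdot)}$ (essentially the same input as extrapolation), but since your primary route via extrapolation is correct this does not affect the argument.
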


\subsection{Littlewood--Paley decomposition and Fourier--Besov spaces}
Let us briefly recall some basic facts on Littlewood--Paley dyadic decomposition theory.  More details may be found in Chap. 2 and Chap. 3 in the book \cite{BCD11}. Choose a smooth radial non-increasing function $\chi$  with $\operatorname{Supp}\chi\subset B(0,\frac{4}{3})$ and $\chi\equiv1$ on $B(0,\frac{3}{4})$. Set $\varphi(\xi)=\chi(\frac{\xi}{2})-\chi(\xi)$. It is not difficult to check that
 $\varphi$ is supported in the shell $\{\xi\in\mathbb{R}^{n},\ \frac{3}{4}\leq
|\xi|\leq \frac{8}{3}\}$, and
\begin{align*}
   \sum_{j\in\mathbb{Z}}\varphi(2^{-j}\xi)=1, \ \ \  \xi\in\mathbb{R}^{n}\backslash\{0\}.
\end{align*}
 For any $f\in\mathcal{S}'(\mathbb{R}^{n})$, the homogeneous dyadic blocks $\Delta_{j}$ ($j\in\mathbb {Z}$) and the low-frequency cutoff operator  $ S_{j}$ are defined for all $j\in\mathbb{Z}$ by
\begin{align*}
 \Delta_{j}f: =\mathcal{F}^{-1}(\varphi(2^{-j}\cdot)\mathcal{F}f), \ \ \ S_{j}f(x): =\mathcal{F}^{-1}(\chi(2^{-j}\cdot)\mathcal{F}f).
\end{align*}
Let $\mathcal{S}'_{h}(\mathbb{R}^{n})$ be the space of tempered distribution $f\in\mathcal{S}'(\mathbb{R}^{n})$ such that
$$
\lim_{j\rightarrow -\infty} S_{j}f(x)=0.
$$
Then one has the unit decomposition for any tempered distribution $f\in\mathcal{S}'_{h}(\mathbb{R}^{n})$:
\begin{align}\label{eq2.5}
 f=\sum_{j\in\mathbb{Z}}\Delta_{j}f.
\end{align}
The above homogeneous dyadic block $\Delta_{j}$ and
the partial summation operator $S_{j}$  satisfy the following quasi-orthogonal properties: for any $f, g\in\mathcal{S}'(\mathbb{R}^{n})$, one has
\begin{align}\label{eq2.6}
  \Delta_{i}\Delta_{j}f\equiv0\ \ \ \text{if}\ \ \ |i-j|\geq 2\ \ \ \text{and}\ \ \
  \Delta_{i}(S_{j-1}f\Delta_{j}g)\equiv0 \ \ \ \text{if}\ \ \ |i-j|\geq 5.
\end{align}
\smallbreak

Applying the above decomposition, the homogeneous Fourier--Besov space $F\dot{B}_{p,r}^s(\mathbb{R}^{n})$ and the Chemin--Lerner type space $ \tilde{L}^{\lambda}(0,T; F\dot{B}^{s}_{p,r}(\mathbb{R}^{n}))$ can be defined as follows:

\begin{definition}\label{def2.5}
Let $s\in \mathbb{R}$ and $1\leq p,r\leq\infty$, the homogeneous Fourier--Besov space $F\dot{B}^{s}_{p,r}(\mathbb{R}^{n})$ is defined to be the set of all tempered distributions $f\in \mathcal{S}'_{h}(\mathbb{R}^{n})$ such that $\hat{f} \in L_{loc}^1(\mathbb{R}^n)$ and the following norm is finite:
\begin{equation*}
  \|f\|_{F\dot{B}^{s}_{p,r}}:= \begin{cases} \left(\sum_{j\in\mathbb{Z}}2^{jsr}\|\widehat{\Delta_{j}f}\|_{L^{p}}^{r}\right)^{\frac{1}{r}}
  \ \ &\text{if}\ \ 1\leq r<\infty,\\
  \sup_{j\in\mathbb{Z}}2^{js}\|\widehat{\Delta_{j}f}\|_{L^{p}}\ \
  &\text{if}\ \
  r=\infty.
 \end{cases}
\end{equation*}
\end{definition}

\begin{definition}\label{def2.6} For $0<T\leq\infty$, $s\in \mathbb{R}$ and $1\leq p, r, \lambda\leq\infty$, we set  {\rm{(}}with the usual convention if
$r=\infty${\rm{)}}:
$$
  \|f\|_{\mathcal{L}^{\lambda}_{T}(F\dot{B}^{s}_{p,r})}:=\big(\sum_{j\in\mathbb{Z}}2^{jsr}\|\widehat{\Delta_{j}f}\|_{L^{\lambda}_{T}(L^{p})}^{r}\big)^{\frac{1}{r}}.
$$
Then we define the space $\mathcal{L}^{\lambda}(0,T; F\dot{B}^{s}_{p,r}(\mathbb{R}^{n}))$  as the set of temperate distributions $f$ over $(0,T)\times \mathbb{R}^{n}$ such that $\lim_{j\rightarrow -\infty}S_{j}f=0$ in $\mathcal{S}'((0,T)\times\mathbb{R}^{n})$ and $\|f\|_{\mathcal{L}^{\lambda}_{T}(F\dot{B}^{s}_{p,r})}<\infty$.
\end{definition}

Let us state some basic properties of the homogeneous Fourier--Besov spaces (cf. Lemma 2.5 in \cite{ZZ19}).
\begin{lemma}\label{le2.7}
The following properties hold in the homogeneous Fourier--Besov spaces:
\begin{itemize}
  \item[(\romannumeral1)] There exists a universal constant $C$ such that
    \begin{align*}
     C^{-1}\|u\|_{F\dot{B}^{s}_{p,r}} \leq  \|\nabla u\|_{F\dot{B}^{s-1}_{p,r}} \leq C\|u\|_{F\dot{B}^{s}_{p,r}}.
    \end{align*}
  \item[(\romannumeral2)] For $1\leq p_1 \leq p_2\leq\infty$ and $1\leq r_1 \leq r_2\leq\infty$, we have the continuous imbedding $F\dot{B}^{s}_{p_2,r_2}(\mathbb{R}^n) \hookrightarrow F\dot{B}^{s-n(\frac{1}{p_1}-\frac{1}{p_2})}_{p_1,r_1}(\mathbb{R}^n)$.
  \item[(\romannumeral3)] For $s_1,s_2 \in \mathbb{R}$ such that $s_1<s_2$ and $\theta\in (0,1)$, there exists a constant $C$ such that
    \begin{align*}
     \|u\|_{F\dot{B}^{s_1\theta+s_2(1-\theta)}_{p,r}} \leq  C\|u\|^{\theta}_{F\dot{B}^{s_1}_{p,r}} \|u\|^{1-\theta}_{F\dot{B}^{s_2}_{p,r}}.
    \end{align*}
\end{itemize}
\end{lemma}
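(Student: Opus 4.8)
The plan is to reduce each of the three statements to an estimate on a single dyadic block and then to sum, or take the supremum, over $j\in\mathbb{Z}$ in the $\ell^{r}$ sense. The structural fact that makes all three short is that $\widehat{\Delta_{j}f}(\xi)=\varphi(2^{-j}\xi)\widehat{f}(\xi)$ is supported in the annulus $\{\frac{3}{4}2^{j}\leq|\xi|\leq\frac{8}{3}2^{j}\}$; consequently every operation on the Fourier side is a pointwise multiplication by a symbol localized there, so that the estimates are obtained by elementary pointwise bounds on $\xi$ followed by H\"older's inequality, with no convolution ever required.

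For (i), I would start from $\widehat{\Delta_{j}\partial_{k}u}(\xi)=i\xi_{k}\varphi(2^{-j}\xi)\widehat{u}(\xi)$. On the support of $\varphi(2^{-j}\cdot)$ one has $|\xi_{k}|\leq|\xi|\leq\frac{8}{3}2^{j}$, so taking $L^{p}$ norms gives the upper bound $\|\widehat{\Delta_{j}\nabla u}\|_{L^{p}}\lesssim 2^{j}\|\widehat{\Delta_{j}u}\|_{L^{p}}$. For the reverse inequality I would exploit that the annulus is bounded away from the origin, so that the identity
\[
\widehat{\Delta_{j}u}(\xi)=\sum_{k=1}^{n}\frac{-i\xi_{k}}{|\xi|^{2}}\,\widehat{\Delta_{j}\partial_{k}u}(\xi)
\]
is valid on the support, where $\frac{|\xi_{k}|}{|\xi|^{2}}\leq\frac{1}{|\xi|}\leq\frac{4}{3}2^{-j}$; this yields $\|\widehat{\Delta_{j}u}\|_{L^{p}}\lesssim 2^{-j}\|\widehat{\Delta_{j}\nabla u}\|_{L^{p}}$. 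Multiplying both bounds by $2^{j(s-1)}$ gives $2^{j(s-1)}\|\widehat{\Delta_{j}\nabla u}\|_{L^{p}}\approx 2^{js}\|\widehat{\Delta_{j}u}\|_{L^{p}}$, and the stated equivalence follows upon taking the $\ell^{r}$ norm.

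For (ii), the only new ingredient is the change of integrability index, which I would handle by H\"older's inequality on the annulus of finite measure $|\{|\xi|\sim 2^{j}\}|\approx 2^{jn}$: for $p_{1}\leq p_{2}$ one has $\|\widehat{\Delta_{j}u}\|_{L^{p_{1}}}\lesssim 2^{jn(\frac{1}{p_{1}}-\frac{1}{p_{2}})}\|\widehat{\Delta_{j}u}\|_{L^{p_{2}}}$. Multiplying by $2^{j(s-n(\frac{1}{p_{1}}-\frac{1}{p_{2}}))}$ absorbs precisely this factor, so the weighted blocks of the target space are pointwise dominated by those of the source space; the remaining change of the summability index is then disposed of by the elementary nesting of the sequence spaces $\ell^{r}$ in $r$. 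For (iii), I would write $2^{j(s_{1}\theta+s_{2}(1-\theta))}\|\widehat{\Delta_{j}u}\|_{L^{p}}=\big(2^{js_{1}}\|\widehat{\Delta_{j}u}\|_{L^{p}}\big)^{\theta}\big(2^{js_{2}}\|\widehat{\Delta_{j}u}\|_{L^{p}}\big)^{1-\theta}$ and apply H\"older's inequality for the counting measure with conjugate exponents $\frac{1}{\theta}$ and $\frac{1}{1-\theta}$ to the $\ell^{r}$ sum (the case $r=\infty$ being immediate by monotonicity of the supremum), which delivers the inequality with $C=1$; this is just the convexity of $s\mapsto\log\|u\|_{F\dot{B}^{s}_{p,r}}$.

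None of these steps is genuinely hard: the whole lemma is bookkeeping built on the frequency localization of the blocks. The one point demanding a little care is the lower bound in (i), where recovering $\widehat{\Delta_{j}u}$ from $\widehat{\Delta_{j}\nabla u}$ requires the division by $|\xi|^{2}$ above, which is legitimate only because the annulus stays away from $\xi=0$; I expect this to be the sole place where the supports must be invoked explicitly rather than as mere bookkeeping.
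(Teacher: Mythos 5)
Your arguments for (i) and (iii) are correct, and they are the natural ones: because the Fourier--Besov norm acts directly on $\widehat{u}$, differentiation is pointwise multiplication by $i\xi_{k}$ on the block support, invertible there since the annulus avoids the origin, and (iii) is exactly H\"older's inequality in $j$ (log-convexity of the norm in $s$), with $C=1$. Note that the paper itself does not prove this lemma at all --- it cites Lemma 2.5 of [ZZ19] --- so for these two items your proposal simply supplies the standard missing details.

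There is, however, a genuine gap in (ii), located exactly at the step you wave off as ``elementary nesting''. Your H\"older estimate on the annulus correctly gives the block inequality
\begin{equation*}
2^{j\left(s-n\left(\frac{1}{p_{1}}-\frac{1}{p_{2}}\right)\right)}\|\widehat{\Delta_{j}u}\|_{L^{p_{1}}}\lesssim 2^{js}\|\widehat{\Delta_{j}u}\|_{L^{p_{2}}},
\end{equation*}
but to finish you must bound the $\ell^{r_{1}}$-norm of the target blocks by the $\ell^{r_{2}}$-norm of the source blocks, i.e.\ you need $\|a\|_{\ell^{r_{1}}}\lesssim\|a\|_{\ell^{r_{2}}}$. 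The nesting of sequence spaces runs the other way: for $r_{1}\leq r_{2}$ one has $\ell^{r_{1}}\subset\ell^{r_{2}}$ with $\|a\|_{\ell^{r_{2}}}\leq\|a\|_{\ell^{r_{1}}}$, so nesting lets you \emph{increase} the summability index when passing from source to target, whereas the statement as printed asks you to decrease it. Indeed, the printed statement is false whenever $r_{1}<r_{2}$: taking $p_{1}=p_{2}=p$ it asserts $F\dot{B}^{s}_{p,r_{2}}(\mathbb{R}^n)\hookrightarrow F\dot{B}^{s}_{p,r_{1}}(\mathbb{R}^n)$, and a function whose $N$ consecutive blocks all have weighted norm $1$ has source norm $N^{1/r_{2}}$ and target norm $N^{1/r_{1}}$, so no uniform constant can exist. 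What your argument actually proves is the statement with the roles of $r_{1}$ and $r_{2}$ interchanged, namely $F\dot{B}^{s}_{p_{2},r_{1}}(\mathbb{R}^n)\hookrightarrow F\dot{B}^{s-n(\frac{1}{p_{1}}-\frac{1}{p_{2}})}_{p_{1},r_{2}}(\mathbb{R}^n)$ --- presumably the intended reading of the lemma, and the only version consistent with $\ell^{r}$ monotonicity; the applications in the paper use $r_{1}=r_{2}=1$ and are unaffected. A correct write-up should prove this corrected statement and flag the index swap, rather than assert that nesting yields the inclusion as printed.
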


We also need to establish the following generalized product estimates between two
distributions in the homogeneous Fourier--Besov spaces, for details, see Lemmas 2.9 and 2.10 in \cite{VZ242}.

\begin{lemma}\label{le2.8}
Let $s>0$ and $1 \leq p, p_1, p_2 \leq \infty$ such that
\begin{equation*}
  1+\frac{1}{p}=\frac{1}{p_{1}}+\frac{1}{p_{2}}.
\end{equation*}
Then there exists a constant $C>0$ such that
\begin{equation}\label{eq2.7}
\|uv\|_{F\dot{B}_{p,1}^{s}} \leq C\left( \|u\|_{F\dot{B}_{p_1,1}^{s}} \|v\|_{F\dot{B}_{p_2,1}^{0}}+\|u\|_{F\dot{B}_{p_2,1}^{0}} \|v\|_{F\dot{B}_{p_1,1}^{s}}\right).
\end{equation}
\end{lemma}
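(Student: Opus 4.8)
The plan is to transfer the product to the Fourier side, where multiplication becomes convolution, $\widehat{uv}=(2\pi)^{-n/2}\,\widehat{u}*\widehat{v}$, and then run a Bony-type paraproduct decomposition tuned to the Fourier--Besov scale. Because the three exponents $p,p_1,p_2$ here are \emph{constants} (not variable), the engine of every estimate will be the classical Young convolution inequality $\|f*g\|_{L^p}\le\|f\|_{L^{p_1}}\|g\|_{L^{p_2}}$, valid precisely under the stated relation $1+\frac1p=\frac1{p_1}+\frac1{p_2}$. First I would write $u=\sum_k\Delta_k u$, $v=\sum_l\Delta_l v$ and split $\sum_{k,l}\Delta_k u\,\Delta_l v$ into the two paraproducts $T_uv=\sum_l S_{l-1}u\,\Delta_l v$ and $T_vu=\sum_k\Delta_k u\,S_{k-1}v$ together with the remainder $R(u,v)=\sum_{|k-l|\le1}\Delta_k u\,\Delta_l v$, and estimate the three pieces separately.

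For the paraproduct $T_vu$ the factor $\Delta_k u\,S_{k-1}v$ has Fourier support in an annulus $\{|\xi|\sim2^k\}$, so by the quasi-orthogonality \eqref{eq2.6} only the indices $|j-k|\le 4$ survive after applying $\Delta_j$. Using $\widehat{\Delta_k u\,S_{k-1}v}=(2\pi)^{-n/2}\widehat{\Delta_k u}*\widehat{S_{k-1}v}$ and Young's inequality with $u$ placed in $L^{p_1}$ and $v$ in $L^{p_2}$, together with the uniform bound $\|\widehat{S_{k-1}v}\|_{L^{p_2}}\le\sum_m\|\widehat{\Delta_m v}\|_{L^{p_2}}=\|v\|_{F\dot B^0_{p_2,1}}$, the weight $2^{js}\approx2^{ks}$ attaches to $\|\widehat{\Delta_k u}\|_{L^{p_1}}$ and, after the $\ell^1$-summation in $j$, rebuilds $\|u\|_{F\dot B^s_{p_1,1}}$. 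This yields $\|T_vu\|_{F\dot B^s_{p,1}}\lesssim\|u\|_{F\dot B^s_{p_1,1}}\|v\|_{F\dot B^0_{p_2,1}}$. The term $T_uv$ is handled identically after swapping the Young assignment (now $u$ in $L^{p_2}$, $v$ in $L^{p_1}$, which is legitimate since the exponent relation is symmetric), producing the companion bound $\|T_uv\|_{F\dot B^s_{p,1}}\lesssim\|u\|_{F\dot B^0_{p_2,1}}\|v\|_{F\dot B^s_{p_1,1}}$; together these account for both terms on the right-hand side of \eqref{eq2.7}.

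The remainder $R(u,v)$ is the delicate piece and the place where the hypothesis $s>0$ is genuinely used. Here the factor $\Delta_k u\,\Delta_l v$ with $|k-l|\le1$ has Fourier support in a \emph{ball} $\{|\xi|\lesssim2^k\}$ rather than an annulus, so $\Delta_j$ no longer pins $j$ to $k$: all $j\le k+N_0$ contribute. After Young's inequality one is left with $\sum_j2^{js}\sum_{k\ge j-N_0}\|\widehat{\Delta_k u}\|_{L^{p_1}}\|\widehat{\widetilde\Delta_k v}\|_{L^{p_2}}$, and I would exchange the order of summation so that for fixed $k$ the inner sum becomes $\sum_{j\le k+N_0}2^{js}\approx 2^{ks}$, the geometric series converging exactly because $s>0$. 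What remains, $\sum_k2^{ks}\|\widehat{\Delta_k u}\|_{L^{p_1}}\|\widehat{\widetilde\Delta_k v}\|_{L^{p_2}}$, is controlled by $\|u\|_{F\dot B^s_{p_1,1}}\sup_k\|\widehat{\widetilde\Delta_k v}\|_{L^{p_2}}\le\|u\|_{F\dot B^s_{p_1,1}}\|v\|_{F\dot B^0_{p_2,1}}$. Summing the three contributions gives \eqref{eq2.7}. The main obstacle is precisely this divergence issue in the remainder: for $s\le0$ the sum $\sum_{j\le k+N_0}2^{js}$ diverges (or yields only the wrong, $\ell^\infty$-type, control), which is why the estimate is stated only for $s>0$ and why the high-high interactions, rather than the paraproducts, dictate the admissible range of $s$.
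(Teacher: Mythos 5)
Your proof is correct: the Bony decomposition carried out on the Fourier side, with Young's convolution inequality (legitimate here precisely because $p,p_1,p_2$ are constant exponents satisfying $1+\frac1p=\frac1{p_1}+\frac1{p_2}$) driving the paraproduct bounds and the exchange-of-summation argument for the remainder correctly isolating where $s>0$ is needed, is exactly the standard route for such product laws. The paper itself gives no proof of Lemma \ref{le2.8} --- it defers to Lemmas 2.9 and 2.10 of \cite{VZ242} --- and your argument matches the standard one behind those references, so there is nothing substantive to flag.
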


\begin{lemma}\label{le2.9}
Let $s_1, s_2 \in \mathbb{R}$ and $1 \leq p_1,p_2 \leq \infty$.  Assume that
\begin{equation*}
   s_1 \leq n\min\{1-\frac{1}{p_1}, 1-\frac{1}{p_2}\}, \ \ s_2\leq n(1-\frac{1}{p_2})
\end{equation*}
and
\begin{equation*}
  s_1+s_2>\max\{0, n(1-\frac{1}{p_1}-\frac{1}{p_2})\}.
\end{equation*}
Then there exists a constant $C>0$ such that
\begin{equation}\label{eq2.11}
\|uv\|_{F\dot{B}_{p_1,1}^{s_1+s_2-n(1-\frac{1}{p_2})}} \leq C \|u\|_{F\dot{B}_{p_1,1}^{s_1}} \|v\|_{F\dot{B}_{p_2,1}^{s_2}}.
\end{equation}
\end{lemma}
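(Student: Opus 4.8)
The plan is to pass to the Fourier side, where (up to the harmless normalization constant) the product $uv$ becomes the convolution $\widehat{u}\ast\widehat{v}$, and to split this convolution according to dyadic frequency interactions via Bony's paraproduct decomposition $uv=T_uv+T_vu+R(u,v)$, with $T_uv=\sum_k S_{k-1}u\,\Delta_k v$, $T_vu=\sum_k S_{k-1}v\,\Delta_k u$ and $R(u,v)=\sum_k\sum_{|k-k'|\le1}\Delta_k u\,\Delta_{k'}v$. For each piece I would estimate the defining quantity $\|\widehat{\Delta_q(\cdot)}\|_{L^{p_1}}$, using the spectral localization \eqref{eq2.6} to decide which indices $k$ actually contribute to a fixed block $q$, then control the resulting convolutions by Young's convolution inequality in the variable $\xi$, and finally convert integrability exponents on each dyadic annulus $\{|\xi|\sim 2^{k}\}$ by H\"older's inequality (a Bernstein-type estimate). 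This last step is exactly what generates the scaling factors $2^{kn(1-\frac{1}{p_i})}$ and hence the regularity shift $-n(1-\frac{1}{p_2})$ appearing in the target space. Throughout I would write $a_k:=2^{ks_1}\|\widehat{\Delta_k u}\|_{L^{p_1}}$ and $b_k:=2^{ks_2}\|\widehat{\Delta_k v}\|_{L^{p_2}}$, so that $\sum_k a_k=\|u\|_{F\dot B^{s_1}_{p_1,1}}$ and $\sum_k b_k=\|v\|_{F\dot B^{s_2}_{p_2,1}}$, and set $\sigma:=s_1+s_2-n(1-\frac{1}{p_2})$.

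For the two paraproducts the output is localized at the high scale, which makes them the easier pieces. In $T_uv$ the factor $\Delta_k v$ forces $q\sim k$, and an appropriate Young split, combined with a Bernstein conversion whose precise form depends on whether $p_1\le p_2$ or $p_1>p_2$, bounds the $q$-block by $2^{-k\sigma}$ times $\|u\|_{F\dot B^{s_1}_{p_1,1}}$ and $b_k$. The low-frequency cascade $\sum_{j\le k-2}\|\widehat{\Delta_j u}\|$ that arises is a geometric series convergent under $s_1\le n(1-\frac{1}{p_1})$ in the first regime and under $s_1\le n(1-\frac{1}{p_2})$ in the second; both are subsumed in the hypothesis $s_1\le n\min\{1-\frac{1}{p_1},1-\frac{1}{p_2}\}$, which is precisely why the $\min$ appears. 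Symmetrically, $T_vu$ is governed by $\sum_{j\le k-2}\|\widehat{\Delta_j v}\|$, convergent exactly under $s_2\le n(1-\frac{1}{p_2})$. Since $q\sim k$, the weight $2^{q\sigma}$ cancels the factor $2^{-k\sigma}$, and the $\ell^1(\mathbb{Z})$-sum over $q$ collapses to $\|u\|_{F\dot B^{s_1}_{p_1,1}}\|v\|_{F\dot B^{s_2}_{p_2,1}}$.

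The main obstacle is the remainder $R(u,v)$. Here the two interacting blocks have comparable frequencies, so their convolution may concentrate near the origin and feed an arbitrarily low output block $q$; consequently every $k\gtrsim q$ contributes and one must sum over $k\ge q-N$ (for a fixed integer $N$ from \eqref{eq2.6}) and then over $q$. A naive Young estimate as for the paraproducts only yields the loss $n(1-\frac{1}{p_2})$ and forces $s_1+s_2>n(1-\frac{1}{p_2})$, which is strictly stronger than claimed. To recover the sharp condition I would instead exploit that the output is localized at the small scale $2^{q}$: estimating the convolution through $L^{\infty}$ (Young with conjugate exponents) and then applying H\"older on the output annulus $\{|\xi|\sim 2^{q}\}$ replaces the loss by $\tau:=n(1-\frac{1}{p_1}-\frac{1}{p_2})$, so that $c_q:=2^{q\sigma}\|\widehat{\Delta_q R}\|_{L^{p_1}}\lesssim 2^{q(s_1+s_2-\tau)}\sum_{k\ge q-N}2^{-k(s_1+s_2-\tau)}a_k b_k$. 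Summing first in $q$ (a geometric series convergent as $q\to-\infty$) and then in $k$ requires precisely $s_1+s_2-\tau>0$, i.e. $s_1+s_2>\max\{0,n(1-\frac{1}{p_1}-\frac{1}{p_2})\}$, the $\max$ with $0$ becoming the binding requirement when $\frac{1}{p_1}+\frac{1}{p_2}>1$, where mere positivity of $s_1+s_2$ is what makes the geometric sum converge. Collecting the three bounds then yields \eqref{eq2.11}.
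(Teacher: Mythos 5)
The paper never proves this lemma: it is imported by citation (``for details, see Lemmas 2.9 and 2.10 in \cite{VZ242}''), so your proposal can only be judged on its own terms, against the standard argument it gestures at. Your architecture is the right one — Bony decomposition read on the Fourier side, Young's inequality for the convolution of blocks, H\"older (Bernstein) conversions on dyadic annuli — and your treatment of the two paraproducts is correct: every admissible Young split produces the total conversion loss $n(1-\frac{1}{p_2})$, and the low-frequency geometric series converge exactly under $s_1\le n\min\{1-\frac{1}{p_1},1-\frac{1}{p_2}\}$ and $s_2\le n(1-\frac{1}{p_2})$, matching the hypotheses.

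The gap is in the remainder when $\frac{1}{p_1}+\frac{1}{p_2}>1$. Your route through $L^\infty$ needs a conjugate pair $(a,a')$ with $\|\widehat{\Delta_k u}\ast\widehat{\Delta_{k'}v}\|_{L^\infty}\le\|\widehat{\Delta_k u}\|_{L^a}\|\widehat{\Delta_{k'}v}\|_{L^{a'}}$, followed by annulus-H\"older conversions $L^a\to L^{p_1}$ and $L^{a'}\to L^{p_2}$; since H\"older on a finite-measure set only lowers exponents, this forces $a\le p_1$ and $a'\le p_2$, i.e. $\frac{1}{p_1}\le\frac{1}{a}\le 1-\frac{1}{p_2}$, a nonempty range only if $\frac{1}{p_1}+\frac{1}{p_2}\le 1$, i.e. $\tau\ge 0$. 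So your displayed bound for $c_q$ is derived only in that regime; taken at face value for $\tau<0$ it would prove the lemma whenever $s_1+s_2>\tau$, hence for some \emph{negative} $s_1+s_2$, which cannot be right (at $s_1+s_2=0$ the $q$-summation genuinely diverges) — a sign that the step does not survive there, rather than that the hypothesis is slack. The repair is standard: for $\frac{1}{p_1}+\frac{1}{p_2}>1$ apply Young with $1+\frac{1}{m}=\frac{1}{p_1}+\frac{1}{p_2}$, so that $\|\widehat{\Delta_k u}\ast\widehat{\Delta_{k'}v}\|_{L^m}\le\|\widehat{\Delta_k u}\|_{L^{p_1}}\|\widehat{\Delta_{k'}v}\|_{L^{p_2}}$ with no conversion on the inputs, and then use H\"older on the \emph{output} annulus, legitimate since $m\ge p_1$: $\|\varphi_q F\|_{L^{p_1}}\lesssim 2^{qn(\frac{1}{p_1}-\frac{1}{m})}\|F\|_{L^m}=2^{qn(1-\frac{1}{p_2})}\|F\|_{L^m}$. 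This yields $c_q\lesssim\sum_{k\ge q-N}2^{(q-k)(s_1+s_2)}a_k b_k$, whose sum over $q$ converges precisely under $s_1+s_2>0$ — the binding hypothesis in this regime. With this case distinction inserted into the remainder estimate, your proof is complete.
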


\subsection{Variable Fourier--Besov spaces}
 The Besov spaces $\dot{B}^{s(\cdot)}_{p(\cdot),r}(\mathbb{R}^{n})$ with variable regularity and integrability were introduced  by Vyb\'{r}al \cite{V09},  Kempka \cite{K09} and Almeida--H\"{a}st\"{o} \cite{AH10}. Inspired by the ideas in \cite{AH10},  Ru \cite{R18} introduced the variable Fourier--Besov spaces to establish the global well-posedness of the 3D Navier--Stokes equations in such spaces.  Let us recall its  definition. Let $p(\cdot),r(\cdot)\in \mathcal{P}_{0}(\mathbb{R}^{n})$. The mixed Lebesgue-sequence space $\ell^{r(\cdot)}(L^{p(\cdot)})$ is defined on sequences of $L^{p(\cdot)}$-functions by the modular
\begin{equation}\label{eq2.17}
 \varrho_{\ell^{r(\cdot)}(L^{p(\cdot)})}\left(\{f_{j}\}_{j\in\mathbb{Z}}\right):=\sum_{j\in \mathbb{Z}}\inf\left\{\lambda_{j}>0\Big{|} \varrho_{p(\cdot)}\left(\frac{f_{j}}{\lambda_{j}^{\frac{1}{r(\cdot)}}}\right) \leq 1 \right\}.
\end{equation}
The norm is defined from this as usual:
\begin{equation*}
 \left\|\{f_{j}\}_{j\in\mathbb{Z}}\right\|_{\ell^{r(\cdot)}(L^{p(\cdot)})}:=\inf\left\{\mu>0\Big{|} \varrho_{\ell^{r(\cdot)}(L^{p(\cdot)})}\left(\{\frac{f_{j}}{\mu}\}_{j\in\mathbb{Z}}\right) \leq 1 \right\}.
\end{equation*}
Since we assume that $r^{+}<\infty$,  it holds that
\begin{equation*}
 \varrho_{\ell^{r(\cdot)}(L^{p(\cdot)})}\left(\{f_{j}\}_{j\in\mathbb{Z}}\right)=\sum_{j\in\mathbb{Z}}\left\||f_{j}|^{r(\cdot)}\right\|_{\frac{p(\cdot)}{r(\cdot)}}.
\end{equation*}

Let us recall the definition of the  Fourier--Besov space $F\dot{B}^{s(\cdot)}_{p(\cdot),r(\cdot)}(\mathbb{R}^{n})$ with variable exponents.

\begin{definition}\label{de2.10}
Let $s(\cdot), p(\cdot),r(\cdot)\in \mathcal{P}_{0}^{\log}(\mathbb{R}^{n})$. We define the variable exponent Fourier--Besov space $F\dot{B}^{s(\cdot)}_{p(\cdot),r(\cdot)}(\mathbb{R}^{n})$  as the collection of all distributions $f\in \mathcal{S}'_{h}(\mathbb{R}^{n})$ such that
\begin{equation}\label{eq2.10}
\left\|f\right\|_{F\dot{B}^{s(\cdot)}_{p(\cdot),r(\cdot)}}:=\left\|\{2^{js(\cdot)}\varphi_{j}\widehat{f}\}_{j\in\mathbb{Z}}\right\|_{\ell^{r(\cdot)}(L^{p(\cdot)}_{\xi})}<\infty.
\end{equation}
\end{definition}

We also need the following Chemin--Lerner type mixed time-space spaces in terms of the Fourier--Besov space with variable exponent.

\begin{definition}\label{de2.11}
Let $s(\cdot), p(\cdot)\in \mathcal{P}_{0}^{\log}(\mathbb{R}^{n})$ and  $1\leq \rho, r\leq\infty$. We define the Chemin--Lerner mixed time-space $\mathcal{L}^{\rho}(0,T; \dot{B}^{s(\cdot)}_{p(\cdot),r}(\mathbb{R}^{n}))$ as the completion of $\mathcal{C}([0,T]; \mathcal{S}(\mathbb{R}^{n}))$ by the norm
\begin{equation}\label{eq2.11}
\left\|f\right\|_{\mathcal{L}^{\rho}_{T}(F\dot{B}^{s(\cdot)}_{p(\cdot),r})}
:=\left(\sum_{j\in\mathbb{Z}}\left\|2^{js(\cdot)}\varphi_{j}\widehat{f}\right\|^{r}_{L^{\rho}_{T}(L^{p(\cdot)}_{\xi})}\right)^{\frac{1}{r}}<\infty
\end{equation}
with the usual change if $\rho=\infty$
or $r=\infty$. If $T=\infty$, we use $\left\|f\right\|_{\mathcal{L}^{\rho}_{t}(F\dot{B}^{s(\cdot)}_{p(\cdot),r})}$ instead of $\left\|f\right\|_{\mathcal{L}^{\rho}_{\infty}(F\dot{B}^{s(\cdot)}_{p(\cdot),r})}$.
\end{definition}

Finally, let us recall the following existence and uniqueness result for an abstract operator equation in a generic Banach space (cf. \cite{L02}, Theorem 13.2).
\begin{proposition}\label{pro2.12}
Let $\mathcal{X}$ be a Banach space and
$\mathcal{B}:\mathcal{X}\times\mathcal{X}\rightarrow\mathcal{X}$  a
bilinear bounded operator. Assume that for any $u,v\in
\mathcal{X}$, we have
$$
  \|\mathcal{B}(u,v)\|_{\mathcal{X}}\leq
  C\|u\|_{\mathcal{X}}\|v\|_{\mathcal{X}}.
$$
Then for any $y\in \mathcal{X}$ such that $\|y\|_{\mathcal{X}}\leq
\eta<\frac{1}{4C}$, the equation $u=y+\mathcal{B}(u,u)$ has a
solution $u$ in $\mathcal{X}$. Moreover, this solution is the only
one such that $\|u\|_{\mathcal{X}}\leq 2\eta$, and depends
continuously on $y$ in the following sense: if
$\|\widetilde{y}\|_{\mathcal{X}}\leq \eta$,
$\widetilde{u}=\widetilde{y}+\mathcal{B}(\widetilde{u},\widetilde{u})$
and $\|\widetilde{u}\|_{\mathcal{X}}\leq 2\eta$, then
$$
  \|u-\widetilde{u}\|_{\mathcal{X}}\leq \frac{1}{1-4\eta
  C}\|y-\widetilde{y}\|_{\mathcal{X}}.
$$
\end{proposition}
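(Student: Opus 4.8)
The plan is to construct the solution of the fixed-point equation $u = y + \mathcal{B}(u,u)$ by the classical Picard iteration (successive approximation) scheme and then to read off existence, uniqueness and continuous dependence from one and the same contraction estimate. First I would set $u_{0} := y$ and define recursively
\[
  u_{k+1} := y + \mathcal{B}(u_{k}, u_{k}), \qquad k \geq 0,
\]
carrying out all estimates inside the closed ball $\overline{B}(0, 2\eta) \subset \mathcal{X}$. The whole argument rests on the single smallness quantity $\theta := 4C\eta$, which is $<1$ by the hypothesis $\eta < \frac{1}{4C}$.

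The first key step is an a priori bound guaranteeing that the iterates never leave the ball. Arguing by induction, if $\|u_{k}\|_{\mathcal{X}} \leq 2\eta$, then the bilinear bound gives
\[
  \|u_{k+1}\|_{\mathcal{X}} \leq \|y\|_{\mathcal{X}} + C\|u_{k}\|_{\mathcal{X}}^{2} \leq \eta + 4C\eta^{2} = \eta\,(1 + 4C\eta) \leq 2\eta,
\]
where the last inequality is precisely $4C\eta \leq 1$. This closes the induction, so $\|u_{k}\|_{\mathcal{X}} \leq 2\eta$ for every $k$, which in turn keeps the Lipschitz constant of the bilinear term under control.

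The second key step is to show $(u_{k})_{k}$ is Cauchy. Writing the difference via bilinearity as $\mathcal{B}(u_{k}, u_{k}) - \mathcal{B}(u_{k-1}, u_{k-1}) = \mathcal{B}(u_{k}, u_{k} - u_{k-1}) + \mathcal{B}(u_{k} - u_{k-1}, u_{k-1})$ and invoking the uniform bound $\|u_{k}\|_{\mathcal{X}} \leq 2\eta$, I would obtain
\[
  \|u_{k+1} - u_{k}\|_{\mathcal{X}} \leq 4C\eta\,\|u_{k} - u_{k-1}\|_{\mathcal{X}} = \theta\,\|u_{k} - u_{k-1}\|_{\mathcal{X}}.
\]
Since $\theta < 1$, iterating this bound yields $\|u_{k+1} - u_{k}\|_{\mathcal{X}} \leq \theta^{k}\|u_{1} - u_{0}\|_{\mathcal{X}}$, so the telescoping series converges and $(u_{k})_{k}$ is Cauchy. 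Completeness of $\mathcal{X}$ produces a limit $u$ with $\|u\|_{\mathcal{X}} \leq 2\eta$, and passing to the limit (the boundedness of $\mathcal{B}$ makes it continuous, hence $\mathcal{B}(u_{k},u_{k}) \to \mathcal{B}(u,u)$) shows $u = y + \mathcal{B}(u,u)$.

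For uniqueness and continuous dependence I would reuse the same splitting. If $u,\widetilde{u}$ are two solutions in the ball, then $\|u - \widetilde{u}\|_{\mathcal{X}} \leq \theta\,\|u - \widetilde{u}\|_{\mathcal{X}}$ with $\theta < 1$, forcing $u = \widetilde{u}$; and subtracting the equations $u = y + \mathcal{B}(u,u)$, $\widetilde{u} = \widetilde{y} + \mathcal{B}(\widetilde{u},\widetilde{u})$ gives $(1 - 4C\eta)\|u - \widetilde{u}\|_{\mathcal{X}} \leq \|y - \widetilde{y}\|_{\mathcal{X}}$, which is exactly the asserted estimate with constant $\frac{1}{1 - 4\eta C}$. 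I do not expect a genuine obstacle here: the argument is routine, and the only point requiring care is the simultaneous bookkeeping that fixes the ball radius as $2\eta$ so that the map both stabilizes the ball and contracts on it, both properties hinging on the single condition $4C\eta < 1$.
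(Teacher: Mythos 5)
Your proof is correct: the induction keeping the iterates in the ball $\overline{B}(0,2\eta)$, the contraction estimate with factor $\theta=4C\eta<1$, and the splitting $\mathcal{B}(u,u)-\mathcal{B}(\widetilde{u},\widetilde{u})=\mathcal{B}(u,u-\widetilde{u})+\mathcal{B}(u-\widetilde{u},\widetilde{u})$ for uniqueness and the stability bound $\|u-\widetilde{u}\|_{\mathcal{X}}\leq \frac{1}{1-4\eta C}\|y-\widetilde{y}\|_{\mathcal{X}}$ are all carried out properly. Note that the paper itself gives no proof of this proposition (it is quoted from Lemari\'{e}-Rieusset \cite{L02}, Theorem 13.2), and your Picard-iteration argument is essentially the standard proof of that cited result, so there is nothing to reconcile.
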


\section{Proof of Theorem \ref{th3.1}}

In order to apply Proposition \ref{pro2.12} to the generalized Navier--Stokes equations \eqref{eq3.1}, we need to get rid of the pressure $\pi$, and for this purpose we apply to \eqref{eq3.1} the Leray projector $\mathbb{P}$ defined by $\mathbb{P}u=u+\nabla(-\Delta)^{-1}\nabla\cdot u$, i.e., $\mathbb{P}$ is the $3\times 3$
matrix pseudo-differential operator with the
symbol $(\delta_{ij}-\frac{\xi_i\xi_j}{|\xi|^2})_{i,j=1}^3$ with
$\delta_{ij}=1$ if $i= j$, and $\delta_{ij}=0$ if $i\neq j$.  Recall that we have $\mathbb{P}(\nabla\pi)=0$, and if a vector field $u$ is divergence free then we have the identity $\mathbb{P}(u)=u$. Thus we rewrite the generalized Navier--Stokes equations \eqref{eq3.1} as
 \begin{equation} \label{eq4.1}
\left\{
\begin{aligned}
& \partial_t u+\Lambda^{\alpha}u +\mathbb{P} \nabla\cdot(u\otimes u)=\mathbb{P}f,  \\
& \nabla\cdot u=0,\\
& u(0,x) =u_0(x).
\end{aligned}
\right.
\end{equation}
Based on the framework of the Kato's analytical semigroup, we can further reduce the system \eqref{eq4.1} into an equivalent integral form:
\begin{equation}\label{eq4.2}
  u(t,x)=e^{-t\Lambda^{\alpha}}u_{0}(x)+\int_{0}^{t}e^{-(t-\tau)\Lambda^{\alpha}}\mathbb{P}fd\tau-\int_{0}^{t}e^{-(t-\tau)\Lambda^{\alpha}}\mathbb{P}\nabla\cdot(u\otimes u)d\tau.
\end{equation}
For simplicity, we denote the third term on the right-hand side of \eqref{eq4.2}  as
\begin{equation*}
  \mathcal{B}_{1}(u,u):=\int_{0}^{t}e^{-(t-\tau)\Lambda^{\alpha}}\mathbb{P}\nabla\cdot(u\otimes u)d\tau.
\end{equation*}

Now,  let the assumptions of Theorem \ref{th3.1} be in force, we introduce the solution space $\mathcal{X}_{t}$ as
$$
\mathcal{X}_{t}:=\mathcal{L}^{\rho}(0,\infty; F\dot{B}^{4-\alpha-\frac{3}{p(\cdot)}+\frac{\alpha}{\rho}}_{p(\cdot),1}(\mathbb{R}^{3}))\cap \mathcal{L}^{\infty}(0,\infty; F\dot{B}^{\frac{5}{2}-\alpha}_{2,1}(\mathbb{R}^{3}))\cap \mathcal{L}^{1}(0,\infty; F\dot{B}^{\frac{5}{2}}_{2,1}(\mathbb{R}^{3}))
$$
and the norm of the  space $\mathcal{X}_{t}$ is endowed by
\begin{equation*}
   \|u\|_{\mathcal{X}_{t}}=\max\big{\{}\|u\|_{\mathcal{L}^{\rho}_{t}( F\dot{B}^{4-\alpha-\frac{3}{p(\cdot)}+\frac{\alpha}{\rho}}_{p(\cdot),1})}, \|u\|_{\mathcal{L}^{\infty}_{t}(F\dot{B}^{\frac{5}{2}-\alpha}_{2,1})}, \|u\|_{\mathcal{L}^{1}_{t}( F\dot{B}^{\frac{5}{2}}_{2,1})} \big{\}}.
\end{equation*}
It can be easily seen that $(\mathcal{X}_{t},  \|\cdot\|_{\mathcal{X}_{t}})$ is a Banach space.
\smallbreak

Let us establish the linear and nonlinear estimates of the integral system \eqref{eq4.2} in the space $\mathcal{X}_{t}$, respectively.
\begin{lemma}\label{le4.1}
Let the assumptions of Theorem \ref{th3.1} be in force. Then for any $u_{0}\in F\dot{B}^{4-\alpha-\frac{3}{p(\cdot)}}_{p(\cdot),1}(\mathbb{R}^{3})$,
there exists a positive constant $C$ such that
\begin{equation}\label{eq4.3}
   \left\|e^{t\Delta} u_{0}\right\|_{\mathcal{X}_{t}}\leq
   C\left\|u_{0}\right\|_{F\dot{B}^{4-\alpha-\frac{3}{p(\cdot)}}_{p(\cdot),1}}.
\end{equation}
\end{lemma}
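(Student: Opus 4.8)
The plan is to regard $e^{-t\Lambda^{\alpha}}u_{0}$ as the solution of the fractional heat equation \eqref{eq1.1} with external force $f\equiv0$, so that the linear estimate \eqref{eq1.2} of Theorem \ref{th1.1} applies directly; since $f\equiv0$ lies in $\mathcal{L}^{\rho}$ for every $\rho$, we may take $\rho=1$ there and hence invoke \eqref{eq1.2} with any $\rho_{1}\in[1,\infty]$. Because $\|\cdot\|_{\mathcal{X}_{t}}$ is the maximum of three norms, it suffices to control each separately by $\|u_{0}\|_{F\dot{B}^{4-\alpha-\frac{3}{p(\cdot)}}_{p(\cdot),1}}$.

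The first norm is handled at once: applying \eqref{eq1.2} with $s(\cdot)=4-\alpha-\frac{3}{p(\cdot)}$, $r=1$, the same $p(\cdot)$, and $\rho_{1}=\rho$ gives the bound for $\|e^{-t\Lambda^{\alpha}}u_{0}\|_{\mathcal{L}^{\rho}_{t}(F\dot{B}^{4-\alpha-\frac{3}{p(\cdot)}+\frac{\alpha}{\rho}}_{p(\cdot),1})}$. The remaining two norms live in the constant-exponent space $p=2$; for these I would first transfer the datum into $F\dot{B}^{\frac{5}{2}-\alpha}_{2,1}$ and then reapply \eqref{eq1.2} with $s=\frac{5}{2}-\alpha$, $p=2$, $r=1$ (constants being admissible), choosing $\rho_{1}=\infty$ for the $\mathcal{L}^{\infty}$ piece and $\rho_{1}=1$ for the $\mathcal{L}^{1}$ piece, the latter landing in $F\dot{B}^{\frac{5}{2}-\alpha+\alpha}_{2,1}=F\dot{B}^{\frac{5}{2}}_{2,1}$. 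Thus everything reduces to the embedding
$$
F\dot{B}^{4-\alpha-\frac{3}{p(\cdot)}}_{p(\cdot),1}(\mathbb{R}^{3})\hookrightarrow F\dot{B}^{\frac{5}{2}-\alpha}_{2,1}(\mathbb{R}^{3}),
$$
the variable-to-constant analogue of Lemma \ref{le2.7}(\romannumeral2); the indices are tuned precisely so that the shift $3(\frac{1}{2}-\frac{1}{p(\cdot)})$ is exactly absorbed.

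This embedding is the one substantive point, and I would prove it block by block. Fix $j$ and put $g_{j}=\varphi_{j}\widehat{f}$, supported in the annulus $A_{j}=\{|\xi|\sim2^{j}\}$ with $|A_{j}|\sim2^{3j}$. Since $p(\cdot)\geq p^{-}\geq2$, the exponent $q(\cdot)$ defined by $\frac{1}{q(\cdot)}=\frac{1}{2}-\frac{1}{p(\cdot)}$ is admissible, and Hölder's inequality (Lemma \ref{le2.2}), applied after the splitting $g_{j}=(2^{-\frac{3j}{p(\cdot)}}g_{j})(2^{\frac{3j}{p(\cdot)}}\mathbf{1}_{A_{j}})$, yields
$$
\|g_{j}\|_{L^{2}}\leq C\,\big\|2^{-\frac{3j}{p(\cdot)}}g_{j}\big\|_{L^{p(\cdot)}}\big\|2^{\frac{3j}{p(\cdot)}}\mathbf{1}_{A_{j}}\big\|_{L^{q(\cdot)}}.
$$
Using $\frac{3}{p(\cdot)}=\frac{3}{2}-\frac{3}{q(\cdot)}$ one factors $2^{\frac{3j}{p(\cdot)}}=2^{\frac{3j}{2}}2^{-\frac{3j}{q(\cdot)}}$, and the modular of $2^{-\frac{3j}{q(\cdot)}}\mathbf{1}_{A_{j}}$ equals $2^{-3j}|A_{j}|\sim1$, so the norm-modular equivalence gives $\|2^{\frac{3j}{p(\cdot)}}\mathbf{1}_{A_{j}}\|_{L^{q(\cdot)}}\lesssim2^{\frac{3j}{2}}$. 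Multiplying by $2^{j(\frac{5}{2}-\alpha)}$ and absorbing $2^{-\frac{3j}{p(\cdot)}}$ into the weight $2^{j(4-\alpha-\frac{3}{p(\cdot)})}$ produces $2^{j(\frac{5}{2}-\alpha)}\|g_{j}\|_{L^{2}}\lesssim\|2^{j(4-\alpha-\frac{3}{p(\cdot)})}\varphi_{j}\widehat{f}\|_{L^{p(\cdot)}}$ for each $j$; summing over $j\in\mathbb{Z}$ (both spaces carry $r=1$, i.e.\ an $\ell^{1}$ sum) yields the embedding.

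The expected obstacle is making the modular bound uniform in $j$, particularly at the borderline $p^{-}=2$, where $q(\cdot)=\infty$ on a set of positive measure and the modular must be read in its $L^{\infty}$-extended form; there, however, $2^{-\frac{3j}{q(\cdot)}}\mathbf{1}_{A_{j}}\leq1$ pointwise, so the bound $\lesssim1$ persists and the argument goes through unchanged.
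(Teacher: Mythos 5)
Your proposal is correct, and its computational core coincides with the paper's: the splitting of an annulus-localized block into an $L^{p(\cdot)}$ factor and a complementary factor via variable-exponent H\"older (Lemma \ref{le2.2}), together with the observation that the residual weight raised to the conjugate exponent $\frac{2p(\cdot)}{p(\cdot)-2}$ produces exactly the Jacobian $2^{-3j}$, so the modular of the annulus term is $\sim 1$ uniformly in $j$ --- this is precisely the paper's computation \eqref{eq4.5}--\eqref{eq4.7}. The difference is organizational. The paper runs this H\"older argument directly on the semigroup inside the mixed time-space norm, i.e.\ it estimates $\|2^{j(\frac{5}{2}-\alpha+\frac{\alpha}{\rho})}\varphi_{j}e^{-t|\cdot|^{\alpha}}\widehat{u}_{0}\|_{L^{\rho}_{t}(L^{2}_{\xi})}$ in one pass, pairing the time decay $\|2^{j\alpha/\rho}e^{-t2^{\alpha j}}\|_{L^{\rho}_{t}}\lesssim 1$ with the modular bound. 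You instead factor the argument into two independent pieces: a static embedding $F\dot{B}^{4-\alpha-\frac{3}{p(\cdot)}}_{p(\cdot),1}(\mathbb{R}^{3})\hookrightarrow F\dot{B}^{\frac{5}{2}-\alpha}_{2,1}(\mathbb{R}^{3})$ (proved by the same block-by-block H\"older/modular computation, but with no semigroup present), followed by an application of Theorem \ref{th1.1} with constant exponents $p=2$, $s=\frac{5}{2}-\alpha$, $f\equiv 0$, choosing $\rho_{1}=1$ and $\rho_{1}=\infty$. This buys modularity and reusability: the embedding is a statement of independent interest that the paper leaves implicit, and the time integration is delegated entirely to the already-proved linear theorem instead of being redone by hand; the paper's direct route avoids stating the embedding but repeats the semigroup decay estimate. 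Your handling of the borderline $p^{-}=2$ (where the conjugate exponent is $\infty$ and the modular must include its essential-supremum part, which is $\le 1$ on the relevant set) is a technicality the paper glosses over in \eqref{eq4.7}, and you resolve it correctly.
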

\begin{proof}
We split the proof into the following steps.

\textbf{Step 1}.  Applying Theorem \ref{th1.1}, one can easily see that
\begin{equation}\label{eq4.4}
\|e^{-t\Lambda^{\alpha}} u_{0}\|_{\mathcal{L}^{\rho}_{t}(F\dot{B}^{4-\alpha-\frac{3}{p(\cdot)}+\frac{\alpha}{\rho}}_{p(\cdot),1})}\leq C\|u_{0}\|_{ F\dot{B}^{4-\alpha-\frac{3}{p(\cdot)}}_{p(\cdot),1}}.
\end{equation}

\textbf{Step 2}.  For any $1\leq \rho \leq \infty$, we can derive that
\begin{align}\label{eq4.5}
   &\left\|e^{-t\Lambda^{\alpha}} u_{0}\right\|_{\mathcal{L}^{\rho}_{t}(F\dot{B}^{\frac{5}{2}-\alpha+\frac{\alpha}{\rho}}_{2,1})}
= \sum_{j\in \mathbb{Z}}\left\|2^{j(\frac{5}{2}-\alpha+\frac{\alpha}{\rho})}\varphi_{j}e^{-t|\cdot|^{\alpha}}\widehat{u}_{0}\right\|_{L^{\rho}_{t}(L^{2}_{\xi})}\nonumber\\
   &  \lesssim\sum_{j\in \mathbb{Z}} \sum_{k=-1}^{1}\left\|2^{j(4-\alpha-\frac{3}{p(\cdot)})}\varphi_{j}\widehat{u}_{0}\right\|_{L^{p(\cdot)}_{\xi}}
   \left\|2^{j(\frac{\alpha}{\rho}-\frac{3}{2}+\frac{3}{p(\cdot)})}\varphi_{j+k}e^{-t|\cdot|^{\alpha}}\right\|_{L^{\rho}_{t}(L^{\frac{2p(\cdot)}{p(\cdot)-2}}_{\xi})}.
\end{align}
Notice that
\begin{align}\label{eq4.6}
   \left\|2^{j(\frac{\alpha}{\rho}-\frac{3}{2}+\frac{3}{p(\cdot)})}\varphi_{j+k}e^{-t|\cdot|^{\alpha}}\right\|_{L^{\rho}_{t}(L^{\frac{2p(\cdot)}{p(\cdot)-2}}_{\xi})}
   &\lesssim  \left\|2^{j\frac{\alpha}{\rho}}e^{-t2^{\alpha j}}\right\|_{L^{\rho}_{t}} \left\|2^{j(-\frac{3}{2}+\frac{3}{p(\cdot)})}\varphi_{j}\right\|_{L^{\frac{2p(\cdot)}{p(\cdot)-2}}_{\xi}}\nonumber\\
   &\lesssim\left\|2^{j(-\frac{3}{2}+\frac{3}{p(\cdot)})}\varphi_{j}\right\|_{L^{\frac{2p(\cdot)}{p(\cdot)-2}}_{\xi}},
\end{align}
and
\begin{align}\label{eq4.7}
   \left\|2^{j(-\frac{3}{2}+\frac{3}{p(\cdot)})}\varphi_{j}\right\|_{L^{\frac{2p(\cdot)}{p(\cdot)-2}}_{\xi}}&= \left\{\lambda>0:\ \int_{\mathbb{R}^{3}}\left|\frac{2^{j(-\frac{3}{2}+\frac{3}{p(\xi)})}\varphi_{j}(\xi)}{\lambda}\right|^{\frac{2p(\xi)}{p(\xi)-2}}d\xi<1\right\}\nonumber\\
   &=\left\{\lambda>0:\ \int_{\mathbb{R}^{3}}\left|\frac{\varphi_{j}(\xi)}{\lambda}\right|^{\frac{2p(\xi)}{p(\xi)-2}}2^{-3j}d\xi<1\right\}\nonumber\\
   &\lesssim \left\{\lambda>0:\ \int_{\mathbb{R}^{3}}\left|\frac{\varphi_{j}(2^{j}\xi)}{\lambda}\right|^{\frac{2p(2^{j}\xi)}{p(2^{j}\xi)-2}}d\xi<1\right\}\nonumber\\
   &\lesssim 1.
\end{align}
Taking the above two estimates \eqref{eq4.6} and \eqref{eq4.7} into
\eqref{eq4.5}, we obtain
\begin{align*}
   \left\|e^{-t\Lambda^{\alpha}} u_{0}\right\|_{\mathcal{L}^{\rho}_{t}(\dot{B}^{\frac{5}{2}-\alpha+\frac{\alpha}{\rho}}_{2,1})}
      \lesssim\sum_{j\in \mathbb{Z}}\left\|2^{j(4-\alpha-\frac{3}{p(\cdot)})}\varphi_{j}\widehat{u}_{0}\right\|_{L^{p(\cdot)}_{\xi}}
   \lesssim\|u_{0}\|_{F\dot{B}^{4-\alpha-\frac{3}{p(\cdot)}}_{p(\cdot),1}}.
\end{align*}
Then taking $\rho=1$ and $\rho=\infty$ in above inequality, respectively, we get
\begin{align}\label{eq4.8}
   \left\|e^{-t\Lambda^{\alpha}} u_{0}\right\|_{\mathcal{L}^{1}_{t}(\dot{B}^{\frac{5}{2}}_{2,1})}+ \left\|e^{-t\Lambda^{\alpha}} u_{0}\right\|_{\mathcal{L}^{\infty}_{t}(\dot{B}^{\frac{5}{2}-\alpha}_{2,1})}
         \lesssim\|u_{0}\|_{F\dot{B}^{4-\alpha-\frac{3}{p(\cdot)}}_{p(\cdot),1}}.
\end{align}
We complete the proof of Lemma \ref{le4.1}.
\end{proof}
\smallbreak

\begin{lemma}\label{le4.2}
Let the assumptions of Theorem \ref{th3.1} be in force. Then for any $f\in\mathcal{L}^{1}(0,\infty; F\dot{B}^{4-\alpha-\frac{3}{p(\cdot)}}_{p(\cdot),1}(\mathbb{R}^{3}))\cap \mathcal{L}^{1}(0,\infty; F\dot{B}^{\frac{5}{2}-\alpha}_{2,1}(\mathbb{R}^{3}))$, there exists a positive constant $C$ such that
\begin{equation}\label{eq4.9}
   \left\|\int_{0}^{t}e^{-(t-\tau)\Lambda^{\alpha}}\mathbb{P}fd\tau\right\|_{\mathcal{X}_{t}}\leq
   C\left(\|f\|_{\mathcal{L}^{1}_{t}( F\dot{B}^{4-\alpha-\frac{3}{p(\cdot)}}_{p(\cdot),1})}+\|f\|_{\mathcal{L}^{1}_{t}( F\dot{B}^{\frac{5}{2}-\alpha}_{2,1})}\right).
\end{equation}
\end{lemma}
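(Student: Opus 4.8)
The plan is to prove the forcing estimate \eqref{eq4.9} by treating each of the three components of the $\mathcal{X}_t$-norm separately, exactly as in the structure of Lemma \ref{le4.1}. The key observation is that the operator $\int_0^t e^{-(t-\tau)\Lambda^\alpha}\mathbb{P}\,\cdot\,d\tau$ is, up to the harmless multiplier $\mathbb{P}$, the Duhamel operator already analyzed in Theorem \ref{th1.1}. Since $\mathbb{P}$ is a matrix of Fourier multipliers with bounded symbols built from $\xi_i\xi_j/|\xi|^2$, it is essentially a combination of Riesz transforms, which by Lemma \ref{le2.4} (inequality \eqref{eq2.4}) act boundedly on every $L^{p(\cdot)}$ with $p(\cdot)\in\mathcal{P}_0^{\log}$. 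Hence the first step is to record that $\mathbb{P}$ commutes with the dyadic blocks $\varphi_j$ and that
\begin{equation*}
\|\varphi_j\widehat{\mathbb{P}f}\|_{L^{p(\cdot)}_\xi}\lesssim \|\varphi_j\widehat{f}\|_{L^{p(\cdot)}_\xi},\qquad
\|\varphi_j\widehat{\mathbb{P}f}\|_{L^{2}_\xi}\lesssim \|\varphi_j\widehat{f}\|_{L^{2}_\xi},
\end{equation*}
so that $\mathbb{P}$ can be discarded at the level of each frequency block at the cost of an absolute constant.

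Next I would apply Theorem \ref{th1.1} directly with $u_0=0$, source term $\mathbb{P}f$, and the parameter choice $\rho=1$. For the first component of the norm, taking $\rho_1=\rho$ and noting that with $\rho=1$ the exponent shift $-\alpha+\frac{\alpha}{\rho}=0$, estimate \eqref{eq1.2} gives
\begin{equation*}
\left\|\int_0^t e^{-(t-\tau)\Lambda^\alpha}\mathbb{P}f\,d\tau\right\|_{\mathcal{L}^{\rho}_t(F\dot{B}^{4-\alpha-\frac{3}{p(\cdot)}+\frac{\alpha}{\rho}}_{p(\cdot),1})}
\lesssim \|\mathbb{P}f\|_{\mathcal{L}^{1}_t(F\dot{B}^{4-\alpha-\frac{3}{p(\cdot)}}_{p(\cdot),1})}
\lesssim \|f\|_{\mathcal{L}^{1}_t(F\dot{B}^{4-\alpha-\frac{3}{p(\cdot)}}_{p(\cdot),1})}.
\end{equation*}
For the two constant-exponent components, I would instead feed Theorem \ref{th1.1} the data measured in the $F\dot{B}^{\frac{5}{2}-\alpha}_{2,1}$ scale (here the variable indices $s(\cdot),p(\cdot)$ degenerate to the constants $\frac{5}{2}-\alpha$ and $2$, which certainly lie in $\mathcal{P}_0^{\log}$). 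Choosing $\rho_1=1$ yields the $\mathcal{L}^1_t(F\dot{B}^{\frac{5}{2}}_{2,1})$ bound and $\rho_1=\infty$ yields the $\mathcal{L}^\infty_t(F\dot{B}^{\frac{5}{2}-\alpha}_{2,1})$ bound, each controlled by $\|f\|_{\mathcal{L}^1_t(F\dot{B}^{\frac{5}{2}-\alpha}_{2,1})}$. Taking the maximum of the three pieces and summing the resulting right-hand sides produces \eqref{eq4.9}.

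The only genuine subtlety, and the step I expect to be the main obstacle, is the rigorous justification that $\mathbb{P}$ is bounded on the variable spaces $L^{p(\cdot)}_\xi$ appearing in the $F\dot{B}^{s(\cdot)}_{p(\cdot),1}$ norm. Since we are working on the Fourier side, $\mathbb{P}$ acts as pointwise multiplication by the smooth symbol $\delta_{ij}-\xi_i\xi_j/|\xi|^2$, which is homogeneous of degree zero and smooth away from the origin; on each dyadic shell $\{|\xi|\sim 2^j\}$ this symbol together with its derivatives is bounded uniformly in $j$ after the natural rescaling $\xi\mapsto 2^j\xi$. One must therefore argue that multiplication by such a localized, uniformly-bounded symbol is $L^{p(\cdot)}$-bounded on each block with a constant independent of $j$; this can be done either by invoking the Riesz-transform bound \eqref{eq2.4} directly (writing the entries of $\mathbb{P}$ as compositions $\mathcal{R}_i\mathcal{R}_j$) or by a Mikhlin-type multiplier theorem for variable Lebesgue spaces under the $\log$-Hölder hypothesis. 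Once this uniform-in-$j$ boundedness is secured, the remainder of the proof is the routine bookkeeping of applying Theorem \ref{th1.1} three times and is otherwise identical in spirit to Lemma \ref{le4.1}.
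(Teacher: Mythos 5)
Your overall strategy is the same as the paper's, whose entire proof is one sentence: since the Leray projector $\mathbb{P}$ is bounded ``under the Fourier variables,'' estimate \eqref{eq4.9} follows directly from Theorem \ref{th1.1}. Your bookkeeping --- applying Theorem \ref{th1.1} with $u_{0}=0$ and $\rho=1$, once in the variable scale $F\dot{B}^{4-\alpha-\frac{3}{p(\cdot)}}_{p(\cdot),1}$ with $\rho_{1}=\rho$ and once in the constant scale $F\dot{B}^{\frac{5}{2}-\alpha}_{2,1}$ with $\rho_{1}=1$ and $\rho_{1}=\infty$ --- is exactly what that one sentence compresses, and it is correct.

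However, the step you flag as ``the main obstacle'' rests on a misconception, and the tools you propose to resolve it are the wrong ones. Inequality \eqref{eq2.4} of Lemma \ref{le2.4} bounds the \emph{physical-space} $L^{p(\cdot)}$-norm of $\mathcal{R}_{j}f$, i.e., it concerns the singular-integral (convolution) action of the Riesz transforms on functions of $x$; it says nothing about the quantity you actually need, in which the $L^{p(\cdot)}_{\xi}$-norm is applied directly to the Fourier transform. The same objection applies to a Mikhlin-type multiplier theorem. What is needed is instead trivial: on the Fourier side, $\mathbb{P}$ acts as pointwise multiplication by the matrix symbol $\big(\delta_{ik}-\frac{\xi_{i}\xi_{k}}{|\xi|^{2}}\big)$, which is an orthogonal projection at each $\xi\neq 0$ and hence has entries (and operator norm) bounded uniformly in $\xi$; and the Luxemburg norm \eqref{eq2.1} is monotone under pointwise a.e.\ domination, since $|g|\leq M|h|$ a.e.\ implies $\rho_{p(\cdot)}\big(\frac{g}{M\lambda}\big)\leq\rho_{p(\cdot)}\big(\frac{h}{\lambda}\big)$ and therefore $\|g\|_{L^{p(\cdot)}}\leq M\|h\|_{L^{p(\cdot)}}$. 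Thus $|\varphi_{j}\widehat{\mathbb{P}f}|\leq C|\varphi_{j}\widehat{f}|$ pointwise yields your two block inequalities with a constant independent of $j$, with no log-H\"older hypothesis and no harmonic analysis whatsoever. This is precisely the raison d'\^etre of Fourier--Besov spaces in the variable-exponent setting: the failure of convolution and singular-integral machinery in $L^{p(\cdot)}$ is bypassed because operators such as $\mathbb{P}$ become bounded pointwise multipliers on the Fourier side. With this one-line justification substituted for your appeal to \eqref{eq2.4}, your proof is complete and coincides with the paper's.
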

\begin{proof}
Since the Leray projector $\mathbb{P}$ is bounded under the Fourier variables, we can exactly  obtain \eqref{eq4.9} by Theorem \ref{th1.1}.
\end{proof}

\begin{lemma}\label{le4.3}
Let the assumptions of Theorem \ref{th3.1} be in force.  Then
there exists a positive constant $C$ such that
\begin{equation}\label{eq4.10}
   \left\|\mathcal{B}_{1}(u,u)\right\|_{\mathcal{X}_{t}}\leq C
    \|u\|_{\mathcal{X}_{t}}^{2}.
\end{equation}
\end{lemma}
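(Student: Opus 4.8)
The plan is to bound $\mathcal{B}_{1}(u,u)$ separately in each of the three norms defining $\mathcal{X}_{t}$, reducing every bound to a product estimate for $u\otimes u$ via the linear theory. Since $\mathcal{B}_{1}(u,u)=\int_{0}^{t}e^{-(t-\tau)\Lambda^{\alpha}}\mathbb{P}\nabla\cdot(u\otimes u)\,d\tau$ is exactly the Duhamel term of \eqref{eq1.1} for the source $f=\mathbb{P}\nabla\cdot(u\otimes u)$ with zero initial data, Theorem \ref{th1.1} applies. The Leray projector $\mathbb{P}$ acts as multiplication by a bounded symbol on the Fourier side, hence is bounded on every $F\dot{B}^{s(\cdot)}_{p(\cdot),1}$, while $\nabla$ costs exactly one derivative by Lemma \ref{le2.7}(\romannumeral1). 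Denote by (A), (B), (C) the three norms of $\mathcal{X}_{t}$, namely the $\mathcal{L}^{\rho}_{t}(F\dot{B}^{4-\alpha-\frac{3}{p(\cdot)}+\frac{\alpha}{\rho}}_{p(\cdot),1})$, the $\mathcal{L}^{\infty}_{t}(F\dot{B}^{\frac{5}{2}-\alpha}_{2,1})$ and the $\mathcal{L}^{1}_{t}(F\dot{B}^{\frac{5}{2}}_{2,1})$ norm respectively. Applying Theorem \ref{th1.1} with source exponent $\rho=1$ to each target, one is left to prove
\[
\|u\otimes u\|_{\mathcal{L}^{1}_{t}(F\dot{B}^{5-\alpha-\frac{3}{p(\cdot)}}_{p(\cdot),1})}\lesssim\|u\|_{\mathcal{X}_{t}}^{2}
\]
for the variable-exponent norm (A), and
\[
\|u\otimes u\|_{\mathcal{L}^{1}_{t}(F\dot{B}^{\frac{7}{2}-\alpha}_{2,1})}\lesssim\|u\|_{\mathcal{X}_{t}}^{2},
\]
which arises from (B) and (C) simultaneously.

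Next I would dispatch the two constant-exponent targets. Combining Lemma \ref{le2.8} (with $p=2$, $p_{1}=2$, $p_{2}=1$) and the embedding $F\dot{B}^{3/2}_{2,1}\hookrightarrow F\dot{B}^{0}_{1,1}$ from Lemma \ref{le2.7}(\romannumeral2) yields, for any $s>0$,
\[
\|u\otimes u\|_{F\dot{B}^{s}_{2,1}}\lesssim\|u\|_{F\dot{B}^{s}_{2,1}}\|u\|_{F\dot{B}^{3/2}_{2,1}};
\]
a direct use of Lemma \ref{le2.9} is excluded here because the target regularity $\frac{7}{2}-\alpha$ forces the sum of the two input regularities to exceed $3$ when $\alpha<2$. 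Taking $s=\frac{7}{2}-\alpha$ and writing both $\frac{7}{2}-\alpha$ and $\frac{3}{2}$ as convex combinations of $\frac{5}{2}-\alpha$ and $\frac{5}{2}$ (with interpolation parameters $\frac{1}{\alpha}$ and $\frac{\alpha-1}{\alpha}$, which sum to $1$), Lemma \ref{le2.7}(\romannumeral3) followed by Hölder in time collapses the right-hand side into $\|u\|_{\mathcal{L}^{\infty}_{t}(F\dot{B}^{5/2-\alpha}_{2,1})}\|u\|_{\mathcal{L}^{1}_{t}(F\dot{B}^{5/2}_{2,1})}\le\|u\|_{\mathcal{X}_{t}}^{2}$, i.e.\ the product of norms (B) and (C).

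For the variable-exponent target I would use a mixed product estimate keeping one factor in the variable space (A) and the other in the constant $L^{2}$-based auxiliary spaces (B), (C). The bookkeeping is as follows: placing the first factor in $F\dot{B}^{4-\alpha-\frac{3}{p(\cdot)}+\frac{\alpha}{\rho}}_{p(\cdot),1}$ and the second in $F\dot{B}^{\frac{5}{2}-\frac{\alpha}{\rho}}_{2,1}$, a Lemma \ref{le2.9}-type rule (output exponent $p(\cdot)$, gain $-\frac{3}{2}$ from the $p=2$ factor) lands precisely at regularity $5-\alpha-\frac{3}{p(\cdot)}$; the second factor is obtained by interpolating (B) and (C) with parameter $\theta=1-\frac{1}{\rho}$ via Lemma \ref{le2.7}(\romannumeral3), and Hölder in time with $\frac{1}{\rho}+\frac{1}{\rho'}=1$ produces the required $\mathcal{L}^{1}_{t}$ integrability. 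The admissibility of all these indices is what the standing hypothesis $2\le p^{-}\le p^{+}\le\frac{6}{5-2\alpha}$ is designed to guarantee.

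The \textbf{main obstacle} is establishing the mixed variable--constant product estimate itself. Because $L^{p(\cdot)}_{\xi}$ is not translation invariant, Young's convolution inequality fails, so the identity ``product on the physical side $=$ convolution on the Fourier side'' cannot be pushed through naively. I would instead perform a Bony paraproduct decomposition directly in the frequency variable, estimate each frequency-localized piece by the generalized Hölder inequality of Lemma \ref{le2.2} in $L^{p(\cdot)}_{\xi}$, and replace the forbidden Young step by a pointwise domination of the localized convolution by the Hardy--Littlewood maximal function, which is bounded on $L^{p(\cdot)}$ by Lemma \ref{le2.4}; the log-Hölder continuity encoded in $\mathcal{P}^{\log}_{0}(\mathbb{R}^{3})$ is precisely what makes this domination uniform across dyadic scales. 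In this scheme the constant-exponent factor, embedded into the Fourier algebra $F\dot{B}^{0}_{1,1}$ through Lemma \ref{le2.7}(\romannumeral2), plays the role of the $L^{1}$ convolution kernel, which is the structural reason the $L^{2}$-based auxiliary norms (B) and (C) must be carried alongside the variable norm (A).
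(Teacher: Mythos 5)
Your treatment of the two constant-exponent norms is correct, and it is in fact a genuinely different (and more careful) route than the paper's: the paper bounds $\|u\otimes u\|_{\mathcal{L}^{1}_{t}(F\dot{B}^{\frac{7}{2}-\alpha}_{2,1})}$ by citing Lemma \ref{le2.9} with $s_{1}=\frac{5}{2}$, $s_{2}=\frac{5}{2}-\alpha$, $p_{1}=p_{2}=2$, even though the hypothesis $s_{1}\leq n\min\{1-\frac{1}{p_{1}},1-\frac{1}{p_{2}}\}=\frac{3}{2}$ is then violated --- exactly the obstruction you point out. Your substitute (Lemma \ref{le2.8} with $p=p_{1}=2$, $p_{2}=1$, the embedding $F\dot{B}^{\frac{3}{2}}_{2,1}\hookrightarrow F\dot{B}^{0}_{1,1}$ from Lemma \ref{le2.7}, then interpolation with exponents $\frac{\alpha-1}{\alpha}$, $\frac{1}{\alpha}$ and H\"older in time) is sound; note that for $\rho=r=1$ the Chemin--Lerner norm and the usual mixed norm coincide by Tonelli, so working pointwise in time there is legitimate.

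The variable-exponent norm, however, is where your proposal has a genuine gap. Everything rests on a mixed product estimate of Lemma \ref{le2.9} type with one factor kept in $F\dot{B}^{4-\alpha-\frac{3}{p(\cdot)}+\frac{\alpha}{\rho}}_{p(\cdot),1}$; no such estimate appears in the paper, and your sketch for proving it does not go through. On the Fourier side the product is a convolution, and the proposed pointwise domination $|\widehat{\Delta_{j}u}\ast\widehat{\Delta_{k}v}|\lesssim\|\widehat{\Delta_{k}v}\|_{L^{1}}\,\mathcal{M}(\widehat{\Delta_{j}u})$ is false for a general annulus-supported $L^{1}$ kernel: take $\widehat{\Delta_{k}v}$ to be an approximate point mass at distance $\sim 2^{j}$ from the origin, so that the convolution is essentially a translate of $\widehat{\Delta_{j}u}$, which is neither pointwise dominated by the maximal function nor uniformly bounded in $L^{p(\cdot)}$ under translation. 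Indeed, were such a domination available, combining it with Lemma \ref{le2.4} would yield Young's inequality $\|f\ast g\|_{L^{p(\cdot)}}\lesssim\|f\|_{L^{p(\cdot)}}\|g\|_{L^{1}}$, which the paper explicitly recalls is \emph{not} valid in variable Lebesgue spaces. The paper's own proof avoids any variable-exponent product estimate: in \eqref{eq4.12} it applies H\"older's inequality (Lemma \ref{le2.2}) with the splitting $\frac{1}{p(\cdot)}=\frac{6-(5-2\alpha)p(\cdot)}{6p(\cdot)}+\frac{5-2\alpha}{6}$ and uses that the annulus factor $\bigl\|2^{-3j(\frac{6-(5-2\alpha)p(\cdot)}{6p(\cdot)})}\varphi_{j}\bigr\|_{L^{\frac{6p(\cdot)}{6-(5-2\alpha)p(\cdot)}}_{\xi}}$ is bounded uniformly in $j$ by the scaling argument \eqref{eq4.7}; this converts the variable norm of $u\otimes u$ into the constant-exponent norm $\|u\otimes u\|_{\mathcal{L}^{1}_{t}(F\dot{B}^{\frac{5}{2}}_{\frac{6}{5-2\alpha},1})}$, after which the constant-exponent Lemma \ref{le2.8} (with $p=\frac{6}{5-2\alpha}$, $p_{1}=2$, $p_{2}=\frac{3}{4-\alpha}$) and the embedding $F\dot{B}^{\frac{5}{2}-\alpha}_{2,1}\hookrightarrow F\dot{B}^{0}_{\frac{3}{4-\alpha},1}$ finish the job. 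The structural point you miss is that this H\"older trick only works on the output side --- it bounds a variable norm from above by a constant one --- so it cannot preserve a factor measured in the norm (A), as your bookkeeping requires; the correct repair is to estimate the whole bilinear term through the auxiliary $L^{2}$-based norms (B) and (C), which is precisely why those norms are built into $\mathcal{X}_{t}$, with the hypothesis $2\leq p^{-}\leq p^{+}\leq\frac{6}{5-2\alpha}$ entering only through the uniform bound on the annulus factor.
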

\begin{proof}
We split the proof into the following two steps.

\textbf{Step 1}.  According to Theorem \ref{th1.1}, it is clear that
\begin{align}\label{eq4.11}
   \left\|\mathcal{B}_{1}(u,u)\right\|_{\mathcal{L}^{\rho}_{t}(F\dot{B}^{4-\alpha-\frac{3}{p(\cdot)}+\frac{\alpha}{\rho}}_{p(\cdot),1})}&\lesssim \left\|\nabla\cdot(u\otimes u)\right\|_{\mathcal{L}^{1}_{t}(F\dot{B}^{4-\alpha-\frac{3}{p(\cdot)}}_{p(\cdot),1})}\nonumber\\&\lesssim \left\|u\otimes u\right\|_{\mathcal{L}^{1}_{t}(F\dot{B}^{5-\alpha-\frac{3}{p(\cdot)}}_{p(\cdot),1})}.
\end{align}
Furthermore, we can estimate the right-hand side of \eqref{eq4.11} as
\begin{align}\label{eq4.12}
   \left\|u\otimes u\right\|_{\mathcal{L}^{1}_{t}(F\dot{B}^{5-\alpha-\frac{3}{p(\cdot)}}_{p(\cdot),1})}
   &=\sum_{j\in\mathbb{Z}}\left\|2^{j(5-\alpha-\frac{3}{p(\cdot)})}\varphi_{j}\mathcal{F}\left(u\otimes u\right)\right\|_{L^{1}_{t}(L^{p(\cdot)}_{\xi})}\nonumber\\
   &\lesssim\sum_{j\in\mathbb{Z}}\sum_{k=-1}^{1}\left\|2^{-3j(\frac{6-(5-2\alpha)p(\cdot)}{6p(\cdot)})}\varphi_{j}\right\|_{L^{\frac{6p(\cdot)}{6-(5-2\alpha)p(\cdot)}}_{\xi}}
   \left\|2^{\frac{5}{2}j}\varphi_{j+k}\mathcal{F}\left(u\otimes u\right)\right\|_{L^{1}_{t}L^{\frac{6}{5-2\alpha}}_{\xi}}\nonumber\\
  \nonumber\\
   &\lesssim \left\|2^{\frac{5}{2}j}\varphi_{j+k}\mathcal{F}\left(u\otimes u\right)\right\|_{L^{1}_{t}L^{\frac{6}{5-2\alpha}}_{\xi}}\nonumber\\
   &\approx  \left\|u\otimes u\right\|_{\mathcal{L}^{1}_{t}(F\dot{B}^{\frac{5}{2}}_{\frac{6}{5-2\alpha},1})}.
\end{align}
By using Lemma \ref{le2.8} with $p=\frac{6}{5-2\alpha}$, $p_{1}=2$ and $p_{2}=\frac{3}{4-\alpha}$, we obtain that
\begin{align}\label{eq4.13}
 \left\|u\otimes u\right\|_{\mathcal{L}^{1}_{t}(F\dot{B}^{\frac{5}{2}}_{\frac{6}{5-2\alpha},1})}
 &\lesssim  \left\|u\right\|_{\mathcal{L}^{1}_{t}(F\dot{B}^{\frac{5}{2}}_{2,1})}\left\|u\right\|_{\mathcal{L}^{\infty}_{t}(F\dot{B}^{0}_{\frac{3}{4-\alpha},1})}\nonumber\\
 &\lesssim  \left\|u\right\|_{\mathcal{L}^{1}_{t}(F\dot{B}^{\frac{5}{2}}_{2,1})}\left\|u\right\|_{\mathcal{L}^{\infty}_{t}(F\dot{B}^{\frac{5}{2}-\alpha}_{2,1})},
\end{align}
where we have used the Sobolev embedding $F\dot{B}^{\frac{5}{2}-\alpha}_{2,1}(\mathbb{R}^{3})\hookrightarrow F\dot{B}^{0}_{\frac{3}{4-\alpha},1}(\mathbb{R}^{3})$ in Lemma \ref{le2.7}. Taking the above estimates \eqref{eq4.12} and \eqref{eq4.13} into \eqref{eq4.11}, we get
\begin{align}\label{eq4.14}
   \left\|\mathcal{B}_{1}(u,u)\right\|_{\mathcal{L}^{\rho}_{t}(F\dot{B}^{4-\alpha-\frac{3}{p(\cdot)}+\frac{\alpha}{\rho}}_{p(\cdot),1})}&\lesssim \left\|u\right\|_{\mathcal{L}^{1}_{t}(F\dot{B}^{\frac{5}{2}}_{2,1})}\left\|u\right\|_{\mathcal{L}^{\infty}_{t}(F\dot{B}^{\frac{5}{2}-\alpha}_{2,1})}.
\end{align}

\textbf{Step 2}. Applying Theorem \ref{th1.1} and Lemma \ref{le2.9}  with $s_{1}=\frac{5}{2}$,  $s_{2}=\frac{5}{2}-\alpha$ and $p_{1}=p_{2}=2$ (the indices of the time variable obey the law of H\"{o}lder's inequality), we obtain
\begin{align}\label{eq4.15}
 \left\|\mathcal{B}_{1}(u,u)\right\|_{\mathcal{L}^{1}_{t}(F\dot{B}^{\frac{5}{2}}_{2,1})}+ \left\|\mathcal{B}(u,u)\right\|_{\mathcal{L}^{\infty}_{t}(F\dot{B}^{\frac{5}{2}-\alpha}_{2,1})}&\leq \left\|u\otimes u\right\|_{\mathcal{L}^{1}_{t}(F\dot{B}^{\frac{7}{2}-\alpha}_{2,1})}\nonumber\\
   &\leq C \left\|u\right\|_{\mathcal{L}^{1}_{t}(F\dot{B}^{\frac{5}{2}}_{2,1})}\left\|u\right\|_{\mathcal{L}^{\infty}_{t}(F\dot{B}^{\frac{5}{2}-\alpha}_{2,1})}.
\end{align}
Putting the above estimates \eqref{eq4.14} and \eqref{eq4.15} together, we get \eqref{eq4.10}.  Thus, we complete the proof of Lemma \ref{le4.3}.
\end{proof}
\smallbreak

Based on the desired linear and nonlinear estimates obtained in Lemmas \ref{le4.1}--\ref{le4.3}, we know that there exist two positive
constants $C_{1}$ and $C_{2}$ such that
\begin{align*}
  \|u\|_{\mathcal{X}_{t}}\leq
 C_{1}\big(\|u_{0}\|_{F\dot{B}^{4-\alpha-\frac{3}{p(\cdot)}}_{p(\cdot),1}}+\|f\|_{\mathcal{L}^{1}_{t}(F\dot{B}^{4-\alpha-\frac{3}{p(\cdot)}}_{p(\cdot),1})}
 +\|f\|_{\mathcal{L}^{1}_{t}(F\dot{B}^{\frac{5}{2}-\alpha}_{2,1})}\big)+C_{2}\|u\|_{\mathcal{X}_{t}}^2.
\end{align*}
Thus if the initial data $u_{0}$ and $f$ satisfy the condition
$$
\|u_{0}\|_{F\dot{B}^{4-\alpha-\frac{3}{p(\cdot)}}_{p(\cdot),1}}+\|f\|_{\mathcal{L}^{1}_{t}(F\dot{B}^{4-\alpha-\frac{3}{p(\cdot)}}_{p(\cdot),1})}
+\|f\|_{\mathcal{L}^{1}_{t}(F\dot{B}^{\frac{5}{2}-\alpha}_{2,1})}\leq \frac{1}{4C_{1}C_{2}},
$$
we can apply Proposition \ref{pro2.12} to get global existence of solution $u\in \mathcal{X}_{t}$ to the equations \eqref{eq3.1}. We complete the proof of Theorem \ref{th3.1}.

\section{Proof of Theorem \ref{th3.2}}

Based on the framework of the Kato's analytical semigroup, we can rewrite system \eqref{eq3.4} as an equivalent integral form:
\begin{equation}\label{eq5.1}
  u(t,x)=e^{-t\Lambda^{\alpha}}u_{0}(x)+\int_{0}^{t}e^{-(t-\tau)\Lambda^{\alpha}}fd\tau+\int_{0}^{t}e^{-(t-\tau)\Lambda^{\alpha}}\nabla\cdot(u\nabla(-\Delta)^{-1}u)d\tau.
\end{equation}
For simplicity, we denote the third term on the right-hand side of \eqref{eq5.1}  as
\begin{equation*}
  \mathcal{B}_{2}(u,u):=\int_{0}^{t}e^{-(t-\tau)\Lambda^{\alpha}}\nabla\cdot(u\nabla(-\Delta)^{-1}u)d\tau.
\end{equation*}

Now,  let the assumptions of Theorem \ref{th3.2} be in force, we introduce the solution space $\mathcal{Y}_{t}$ as follows:
$$
\mathcal{Y}_{t}:=\mathcal{L}^{\rho}(0,\infty; F\dot{B}^{3-\alpha-\frac{3}{p(\cdot)}+\frac{\alpha}{\rho}}_{p(\cdot),1}(\mathbb{R}^{3}))\cap \mathcal{L}^{1}(0,\infty; F\dot{B}^{\frac{3}{2}}_{2,1}(\mathbb{R}^{3}))\cap \mathcal{L}^{\infty}(0,\infty; F\dot{B}^{\frac{3}{2}-\alpha}_{2,1}(\mathbb{R}^{3}))
$$
and the norm of the  space $\mathcal{Y}_{t}$ is endowed by
\begin{equation*}
   \|u\|_{\mathcal{Y}_{t}}=\max\big{\{}\|u\|_{\mathcal{L}^{\rho}_{t}( F\dot{B}^{3-\alpha-\frac{3}{p(\cdot)}+\frac{\alpha}{\rho}}_{p(\cdot),1})}, \|u\|_{\mathcal{L}^{1}_{t}( F\dot{B}^{\frac{3}{2}}_{2,1})}, \|u\|_{\mathcal{L}^{\infty}_{t}(F\dot{B}^{\frac{3}{2}-\alpha}_{2,1})} \big{\}}.
\end{equation*}
It can be easily seen that $(\mathcal{Y}_{t},  \|\cdot\|_{\mathcal{Y}_{t}})$ is a Banach space.
\smallbreak

In the sequel, we shall establish the linear and nonlinear estimates of the integral equation \eqref{eq5.1} in the space $\mathcal{Y}_{t}$, respectively.
\begin{lemma}\label{le5.1}
Let the assumptions of Theorem \ref{th3.2} be in force. Then for any $u_{0}\in F\dot{B}^{3-\alpha-\frac{3}{p(\cdot)}}_{p(\cdot),1}(\mathbb{R}^{3})$,
there exists a positive constant $C$ such that
\begin{equation}\label{eq5.2}
   \|e^{t\Delta} u_{0}\|_{\mathcal{Y}_{t}}\leq
   C\|u_{0}\|_{F\dot{B}^{3-\alpha-\frac{3}{p(\cdot)}}_{p(\cdot),1}}.
\end{equation}
\end{lemma}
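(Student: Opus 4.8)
The plan is to reproduce, almost verbatim, the two-step argument used for Lemma \ref{le4.1}, since the target space $\mathcal{Y}_t$ differs from $\mathcal{X}_t$ only by lowering every regularity index by one unit: the data index $4-\alpha-\frac{3}{p(\cdot)}$ becomes $3-\alpha-\frac{3}{p(\cdot)}$, and the $L^2$-based indices $\frac{5}{2},\frac{5}{2}-\alpha$ become $\frac{3}{2},\frac{3}{2}-\alpha$. Because $\|\cdot\|_{\mathcal{Y}_t}$ is the maximum of three norms, I would bound each of the three factors separately (here $e^{-t\Lambda^\alpha}u_0$ denotes the flow written $e^{t\Delta}u_0$ in the statement) and then take the maximum.

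First, for the component $\mathcal{L}^\rho_t(F\dot{B}^{3-\alpha-\frac{3}{p(\cdot)}+\frac{\alpha}{\rho}}_{p(\cdot),1})$, I would apply Theorem \ref{th1.1} directly with $f=0$, $s(\cdot)=3-\alpha-\frac{3}{p(\cdot)}$, $r=1$ and $\rho_1=\rho$, which yields the bound by $\|u_0\|_{F\dot{B}^{3-\alpha-\frac{3}{p(\cdot)}}_{p(\cdot),1}}$ immediately. For the two remaining $L^2$-based components I would imitate Step 2 of Lemma \ref{le4.1}: on each dyadic block I split $L^2_\xi = L^{p(\cdot)}_\xi\cdot L^{\frac{2p(\cdot)}{p(\cdot)-2}}_\xi$ via the variable H\"older inequality (Lemma \ref{le2.2}), peel off the factor $\|2^{j(3-\alpha-\frac{3}{p(\cdot)})}\varphi_j\widehat u_0\|_{L^{p(\cdot)}_\xi}$, and leave the heat kernel carrying the residual weight $2^{j(\frac{\alpha}{\rho}-\frac{3}{2}+\frac{3}{p(\cdot)})}$.

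The crucial observation is that this residual exponent is \emph{identical} to the Navier--Stokes case, since $(\frac{3}{2}-\alpha+\frac{\alpha}{\rho})-(3-\alpha-\frac{3}{p(\cdot)})=\frac{\alpha}{\rho}-\frac{3}{2}+\frac{3}{p(\cdot)}=(\frac{5}{2}-\alpha+\frac{\alpha}{\rho})-(4-\alpha-\frac{3}{p(\cdot)})$. Consequently the time factor $\|2^{j\alpha/\rho}e^{-t2^{\alpha j}}\|_{L^\rho_t}\lesssim 1$ and the frequency factor $\|2^{j(-\frac{3}{2}+\frac{3}{p(\cdot)})}\varphi_j\|_{L^{\frac{2p(\cdot)}{p(\cdot)-2}}_\xi}\lesssim 1$ are estimated exactly as in \eqref{eq4.6}--\eqref{eq4.7}, word for word. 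Summing over $j\in\mathbb{Z}$ and specialising to $\rho=\infty$ (for the $\mathcal{L}^\infty_t(F\dot{B}^{\frac{3}{2}-\alpha}_{2,1})$ norm) and $\rho=1$ (for the $\mathcal{L}^1_t(F\dot{B}^{\frac{3}{2}}_{2,1})$ norm), in analogy with \eqref{eq4.8}, would close the estimate.

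The step I expect to be the main obstacle is the uniform-in-$j$ bound on the rescaled variable-exponent cutoff in \eqref{eq4.7}: after the change of variables $\xi\mapsto 2^j\xi$ one must control $\int_{\mathbb{R}^3}|\varphi(\xi)/\lambda|^{\frac{2p(2^j\xi)}{p(2^j\xi)-2}}d\xi$ uniformly in $j$, which relies on the log-H\"older continuity of $p(\cdot)$ (so that $p(2^j\xi)$ is comparable to a constant on the fixed annulus $\operatorname{Supp}\varphi$), together with the constraint $2\leq p^-\leq p^+\leq\frac{6}{5-2\alpha}$ keeping the conjugate exponent $\frac{2p(\cdot)}{p(\cdot)-2}$ in an admissible range. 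Since this is precisely the mechanism already validated for Lemma \ref{le4.1}, no genuinely new difficulty arises, and the proof reduces to transcribing \eqref{eq4.5}--\eqref{eq4.8} with the indices shifted down by one.
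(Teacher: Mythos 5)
Your proposal is correct and follows essentially the same route as the paper: Step 1 invokes Theorem \ref{th1.1} for the variable-exponent component, and Step 2 reuses the estimates \eqref{eq4.6}--\eqref{eq4.7} verbatim (the paper itself says ``we can use the facts \eqref{eq4.6} and \eqref{eq4.7}''), which is justified precisely by your observation that the residual exponent $\frac{\alpha}{\rho}-\frac{3}{2}+\frac{3}{p(\cdot)}$ is unchanged when all regularity indices drop by one. You also correctly read $e^{t\Delta}u_{0}$ in the statement as the fractional semigroup $e^{-t\Lambda^{\alpha}}u_{0}$, which is how the paper's proof treats it.
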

\begin{proof}
We split the proof into the following two steps.

\textbf{Step 1}.  Applying Theorem \ref{th1.1}, one can easily see that
\begin{equation}\label{eq5.3}
\|e^{-t\Lambda^{\alpha}} u_{0}\|_{\mathcal{L}^{\rho}_{t}(F\dot{B}^{3-\alpha-\frac{3}{p(\cdot)}+\frac{\alpha}{\rho}}_{p(\cdot),1})}\leq C\|u_{0}\|_{ F\dot{B}^{3-\alpha-\frac{3}{p(\cdot)}}_{p(\cdot),1}}.
\end{equation}

\textbf{Step 2}. For any $1\leq \rho \leq \infty$, we can use the facts \eqref{eq4.6} and \eqref{eq4.7} to derive that
\begin{align}\label{eq5.4}
   \left\|e^{-t\Lambda^{\alpha}} u_{0}\right\|_{\mathcal{L}^{\rho}_{t}(F\dot{B}^{\frac{3}{2}-\alpha+\frac{\alpha}{\rho}}_{2,1})}
   &=\sum_{j\in \mathbb{Z}}\left\|2^{j(\frac{3}{2}-\alpha+\frac{\alpha}{\rho})}\varphi_{j}e^{-t|\cdot|^{\alpha}}\widehat{u}_{0}\right\|_{L^{\rho}_{t}(L^{2}_{\xi})}\nonumber\\
   &\lesssim\sum_{j\in \mathbb{Z}} \sum_{k=-1}^{1}\left\|2^{j(3-\alpha-\frac{3}{p(\cdot)})}\varphi_{j}\widehat{u}_{0}\right\|_{L^{p(\cdot)}_{\xi}}
   \left\|2^{j(\frac{\alpha}{\rho}-\frac{3}{2}+\frac{3}{p(\cdot)})}\varphi_{j+k}e^{-t|\cdot|^{\alpha}}\right\|_{L^{\rho}_{t}(L^{\frac{2p(\cdot)}{p(\cdot)-2}}_{\xi})}\nonumber\\
    &\lesssim\sum_{j\in \mathbb{Z}}\left\|2^{j(3-\alpha-\frac{3}{p(\cdot)})}\varphi_{j}\widehat{u}_{0}\right\|_{L^{p(\cdot)}_{\xi}}\nonumber\\
   &\lesssim\|u_{0}\|_{F\dot{B}^{3-\alpha-\frac{3}{p(\cdot)}}_{p(\cdot),1}}.
\end{align}
Taking $\rho=1$ and $\rho=\infty$ in above inequality \eqref{eq5.4}, respectively, we get
\begin{align}\label{eq5.5}
   \left\|e^{-t\Lambda^{\alpha}} u_{0}\right\|_{\mathcal{L}^{1}_{t}(\dot{B}^{\frac{3}{2}}_{2,1})}+ \left\|e^{-t\Lambda^{\alpha}} u_{0}\right\|_{\mathcal{L}^{\infty}_{t}(\dot{B}^{\frac{3}{2}-\alpha}_{2,1})}
         \lesssim\|u_{0}\|_{F\dot{B}^{3-\alpha-\frac{3}{p(\cdot)}}_{p(\cdot),1}}.
\end{align}
We complete the proof of Lemma \ref{le5.1}.
\end{proof}

\begin{lemma}\label{le5.2}
Let the assumptions of Theorem \ref{th3.2} be in force. Then for any $f\in\mathcal{L}^{1}(0,\infty; F\dot{B}^{3-\alpha-\frac{3}{p(\cdot)}}_{p(\cdot),1}(\mathbb{R}^{3}))\cap \mathcal{L}^{1}(0,\infty; F\dot{B}^{\frac{3}{2}-\alpha}_{2,1}(\mathbb{R}^{3}))$, there exists a positive constant $C$ such that
\begin{equation}\label{eq5.6}
   \left\|\int_{0}^{t}e^{-(t-\tau)\Lambda^{\alpha}}fd\tau\right\|_{\mathcal{Y}_{t}}\leq
   C\left(\|f\|_{\mathcal{L}^{1}_{t}(F\dot{B}^{3-\alpha-\frac{3}{p(\cdot)}}_{p(\cdot),1})}+\|f\|_{\mathcal{L}^{1}_{t}(F\dot{B}^{\frac{3}{2}-\alpha}_{2,1})}\right).
\end{equation}
\end{lemma}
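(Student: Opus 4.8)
The plan is to exploit the fact that the Duhamel term $\int_{0}^{t}e^{-(t-\tau)\Lambda^{\alpha}}f\,d\tau$ is precisely the solution of the fractional heat equation \eqref{eq1.1} with zero initial datum and source $f$, so that the whole estimate follows from three applications of Theorem \ref{th1.1}, one for each of the three building blocks of the norm $\|\cdot\|_{\mathcal{Y}_t}$. Since there is no Leray projector to deal with here, the argument is in fact slightly simpler than the one behind Lemma \ref{le4.2}.

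First I would handle the variable-exponent piece $\mathcal{L}^{\rho}_t(F\dot{B}^{3-\alpha-\frac{3}{p(\cdot)}+\frac{\alpha}{\rho}}_{p(\cdot),1})$. Applying Theorem \ref{th1.1} with $u_0=0$, regularity $s(\cdot)=3-\alpha-\frac{3}{p(\cdot)}$, summation index $r=1$, source time-index $\rho=1$ and output time-index $\rho_1=\rho$, one observes that the source-side exponent $s(\cdot)-\alpha+\frac{\alpha}{\rho}$ collapses to $s(\cdot)$ when the source is measured in $\mathcal{L}^1_t$, whence
$$
\left\|\int_{0}^{t}e^{-(t-\tau)\Lambda^{\alpha}}f\,d\tau\right\|_{\mathcal{L}^{\rho}_t(F\dot{B}^{3-\alpha-\frac{3}{p(\cdot)}+\frac{\alpha}{\rho}}_{p(\cdot),1})}\lesssim \|f\|_{\mathcal{L}^{1}_{t}(F\dot{B}^{3-\alpha-\frac{3}{p(\cdot)}}_{p(\cdot),1})}.
$$

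Next I would treat the two constant-exponent pieces by the same device, now taking the constant exponent $p(\cdot)\equiv 2$ (which indeed lies in $\mathcal{P}^{\log}_0(\mathbb{R}^3)$), fixed regularity $s=\frac{3}{2}-\alpha$, $r=1$ and source time-index $\rho=1$. The choice $\rho_1=1$ yields the target space $F\dot{B}^{\frac{3}{2}-\alpha+\alpha}_{2,1}=F\dot{B}^{\frac{3}{2}}_{2,1}$, while $\rho_1=\infty$ yields $F\dot{B}^{\frac{3}{2}-\alpha}_{2,1}$; in both cases the source lands in $\mathcal{L}^1_t(F\dot{B}^{\frac{3}{2}-\alpha}_{2,1})$, so that
$$
\left\|\int_{0}^{t}e^{-(t-\tau)\Lambda^{\alpha}}f\,d\tau\right\|_{\mathcal{L}^{1}_t(F\dot{B}^{\frac{3}{2}}_{2,1})}+\left\|\int_{0}^{t}e^{-(t-\tau)\Lambda^{\alpha}}f\,d\tau\right\|_{\mathcal{L}^{\infty}_t(F\dot{B}^{\frac{3}{2}-\alpha}_{2,1})}\lesssim \|f\|_{\mathcal{L}^{1}_{t}(F\dot{B}^{\frac{3}{2}-\alpha}_{2,1})}.
$$

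Finally, since $\|\cdot\|_{\mathcal{Y}_t}$ is the maximum of the three norms estimated above, combining the two displayed bounds gives \eqref{eq5.6}. The only point requiring care is the bookkeeping of indices: one must verify that setting $\rho=1$ in the source-side exponent $s(\cdot)-\alpha+\frac{\alpha}{\rho}$ returns $s(\cdot)$ in each application, so that the source is measured exactly in the $\mathcal{L}^1_t$ spaces appearing on the right-hand side of \eqref{eq5.6}. There is no genuine analytic obstacle here, as Theorem \ref{th1.1} already encapsulates the relevant smoothing of the fractional heat semigroup; the lemma is thus essentially a corollary of that theorem.
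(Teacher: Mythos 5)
Your proposal is correct and follows essentially the same route as the paper, whose entire proof of this lemma is the one-line remark that the estimate ``can be exactly obtained by Theorem \ref{th1.1}.'' Your index bookkeeping (taking $u_{0}=0$, source time-index $\rho=1$ so that $s(\cdot)-\alpha+\frac{\alpha}{\rho}$ collapses to $s(\cdot)$, and then choosing the output index $\rho_{1}$ to be $\rho$, $1$, and $\infty$ for the three components of $\|\cdot\|_{\mathcal{Y}_{t}}$) simply makes explicit what that appeal entails.
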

\begin{proof}
This can be exactly obtained by Theorem \ref{th1.1}.
\end{proof}

\begin{lemma}\label{le5.3}
Let the assumptions of Theorem \ref{th3.2} be in force.  Then
there exists a positive constant $C$ such that
\begin{equation}\label{eq5.7}
   \left\|\mathcal{B}_{2}(u,u)\right\|_{\mathcal{Y}_{t}}\leq C
    \|u\|_{\mathcal{Y}_{t}}^{2}.
\end{equation}
\end{lemma}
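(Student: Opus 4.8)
The plan is to estimate the bilinear term $\mathcal{B}_{2}(u,u)$ in each of the three component norms defining $\|\cdot\|_{\mathcal{Y}_{t}}$, exactly mirroring the two-step strategy used in Lemma \ref{le4.3}. The key structural difference from the Navier--Stokes case is the nonlinearity: here $\nabla\cdot(u\nabla(-\Delta)^{-1}u)$ contains the zeroth-order operator $\nabla(-\Delta)^{-1}$, which on the Fourier side is a multiplier of the form $i\xi/|\xi|^{2}$. I would first record that this multiplier maps $F\dot{B}^{s(\cdot)}_{p(\cdot),1}$ into $F\dot{B}^{s(\cdot)+1}_{p(\cdot),1}$ (it raises regularity by one, being order $-1$), so that $\nabla(-\Delta)^{-1}u$ gains one derivative relative to $u$. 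Combined with the outer $\nabla\cdot$, the total nonlinearity behaves like a product $u\cdot v$ where $v$ has the same regularity as $u$, followed by a single derivative --- precisely the same derivative count as in the Navier--Stokes bilinear term. This is why the two theorems share the index structure $3-\alpha-\tfrac{3}{p(\cdot)}$ shifted down by one from $4-\alpha-\tfrac{3}{p(\cdot)}$.

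\textbf{Step 1: the variable-exponent norm.}
For the first component I would apply Theorem \ref{th1.1} to pull the time integral out, reducing to
\[
\left\|\mathcal{B}_{2}(u,u)\right\|_{\mathcal{L}^{\rho}_{t}(F\dot{B}^{3-\alpha-\frac{3}{p(\cdot)}+\frac{\alpha}{\rho}}_{p(\cdot),1})}\lesssim \left\|u\nabla(-\Delta)^{-1}u\right\|_{\mathcal{L}^{1}_{t}(F\dot{B}^{4-\alpha-\frac{3}{p(\cdot)}}_{p(\cdot),1})},
\]
where the $\nabla\cdot$ costs one derivative. Then, just as in \eqref{eq4.12}, I would use the scaling computation \eqref{eq4.6}--\eqref{eq4.7} to exchange the variable exponent $p(\cdot)$ for the constant exponent $\tfrac{6}{5-2\alpha}$, landing in a constant-exponent Fourier--Besov space $F\dot{B}^{\frac{3}{2}}_{\frac{6}{5-2\alpha},1}$. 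At that point Lemma \ref{le2.8} with $p=\tfrac{6}{5-2\alpha}$, $p_{1}=2$, $p_{2}=\tfrac{3}{4-\alpha}$ splits the product, and the Sobolev embedding $F\dot{B}^{\frac{3}{2}-\alpha}_{2,1}\hookrightarrow F\dot{B}^{0}_{\frac{3}{4-\alpha},1}$ from Lemma \ref{le2.7}(ii), together with the one-derivative gain from $\nabla(-\Delta)^{-1}$, bounds everything by $\|u\|_{\mathcal{L}^{1}_{t}(F\dot{B}^{\frac{3}{2}}_{2,1})}\|u\|_{\mathcal{L}^{\infty}_{t}(F\dot{B}^{\frac{3}{2}-\alpha}_{2,1})}\le \|u\|_{\mathcal{Y}_{t}}^{2}$.

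\textbf{Step 2: the two constant-exponent norms, and the main obstacle.}
For the $\mathcal{L}^{1}_{t}(F\dot{B}^{\frac{3}{2}}_{2,1})$ and $\mathcal{L}^{\infty}_{t}(F\dot{B}^{\frac{3}{2}-\alpha}_{2,1})$ components I would again invoke Theorem \ref{th1.1} and reduce to estimating $\|u\nabla(-\Delta)^{-1}u\|_{\mathcal{L}^{1}_{t}(F\dot{B}^{\frac{5}{2}-\alpha}_{2,1})}$, then apply Lemma \ref{le2.9} with $p_{1}=p_{2}=2$ and regularity indices chosen so that $s_{1}+s_{2}-n(1-\tfrac{1}{p_{2}})=\tfrac{5}{2}-\alpha$ in dimension $n=3$; here the extra derivative from $\nabla(-\Delta)^{-1}$ must be absorbed into the choice of $s_{1},s_{2}$, using that $\nabla(-\Delta)^{-1}$ raises regularity by one. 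The main obstacle I anticipate is verifying the hypotheses of Lemma \ref{le2.9} --- namely the constraints $s_{i}\le n\min\{1-\tfrac1{p_{1}},1-\tfrac1{p_{2}}\}$ and the strict lower bound $s_{1}+s_{2}>\max\{0,n(1-\tfrac1{p_{1}}-\tfrac1{p_{2}})\}$ --- because the zeroth-order Riesz-type factor $\nabla(-\Delta)^{-1}$ shifts one of the regularity indices and one must confirm that the admissible window for $\alpha\in(1,2]$ and $p^{+}\le\tfrac{6}{5-2\alpha}$ is nonempty. Checking that the endpoint $\alpha=2$ and the constraint on $p^{+}$ are exactly compatible with these inequalities is the delicate bookkeeping step; once that is confirmed, combining the Step 1 and Step 2 bounds yields \eqref{eq5.7}.
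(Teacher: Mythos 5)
Your Step 2 plan matches the paper's argument (Lemma \ref{le2.9} with $s_{1}=\tfrac{3}{2}$, $s_{2}=\tfrac{5}{2}-\alpha$, $p_{1}=p_{2}=2$; the hypotheses indeed hold for all $1<\alpha\le 2$, since $s_{1}=\tfrac32\le\tfrac32$, $s_{2}=\tfrac52-\alpha\le\tfrac32$ and $s_{1}+s_{2}=4-\alpha>0$). But Step 1 has a genuine gap. After reducing to $\|u\nabla(-\Delta)^{-1}u\|_{\mathcal{L}^{1}_{t}(F\dot{B}^{\frac{3}{2}}_{\frac{6}{5-2\alpha},1})}$, you apply Lemma \ref{le2.8} directly to the \emph{asymmetric} product $u\cdot\nabla(-\Delta)^{-1}u$. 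That lemma produces \emph{two} terms. The first, $\|u\|_{F\dot{B}^{\frac{3}{2}}_{2,1}}\|\nabla(-\Delta)^{-1}u\|_{F\dot{B}^{0}_{\frac{3}{4-\alpha},1}}$, is fine: the correct embedding $F\dot{B}^{\frac{5}{2}-\alpha}_{2,1}\hookrightarrow F\dot{B}^{0}_{\frac{3}{4-\alpha},1}$ applied to $\nabla(-\Delta)^{-1}u$ (which carries one more derivative than $u$) bounds it by $\|u\|_{\mathcal{L}^{1}_{t}(F\dot{B}^{\frac{3}{2}}_{2,1})}\|u\|_{\mathcal{L}^{\infty}_{t}(F\dot{B}^{\frac{3}{2}-\alpha}_{2,1})}$. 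The cross term $\|u\|_{F\dot{B}^{0}_{\frac{3}{4-\alpha},1}}\|\nabla(-\Delta)^{-1}u\|_{F\dot{B}^{\frac{3}{2}}_{2,1}}$, however, is \emph{not} controlled by the components of $\mathcal{Y}_{t}$: bounding $\|u\|_{F\dot{B}^{0}_{\frac{3}{4-\alpha},1}}$ needs $u\in F\dot{B}^{\frac{5}{2}-\alpha}_{2,1}$, one derivative more than the $\mathcal{L}^{\infty}_{t}$ component provides, while $\|\nabla(-\Delta)^{-1}u\|_{F\dot{B}^{\frac{3}{2}}_{2,1}}\approx\|u\|_{F\dot{B}^{\frac{1}{2}}_{2,1}}$ is likewise not one of your norms. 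This term can only be closed by an extra argument (interpolation via Lemma \ref{le2.7}(iii) combined with H\"{o}lder in time with exponents $\alpha$ and $\tfrac{\alpha}{\alpha-1}$), which you never supply. Note also that the embedding you actually quote, $F\dot{B}^{\frac{3}{2}-\alpha}_{2,1}\hookrightarrow F\dot{B}^{0}_{\frac{3}{4-\alpha},1}$, is false: by Lemma \ref{le2.7}(ii) the target of $F\dot{B}^{\frac{3}{2}-\alpha}_{2,1}$ at integrability $\tfrac{3}{4-\alpha}$ has regularity $-1$, not $0$.

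The idea you are missing is the symmetrization used in the paper: writing $\phi=(-\Delta)^{-1}u$, one has the identity
\begin{equation*}
u\nabla(-\Delta)^{-1}u=-\nabla\cdot\Bigl(\nabla\phi\otimes\nabla\phi-\tfrac{1}{2}\,|\nabla\phi|^{2}I\Bigr),
\end{equation*}
which converts the asymmetric product into a product of two copies of $\nabla\phi=\nabla(-\Delta)^{-1}u$, at the cost of one extra outer derivative (so the estimate is performed at level $F\dot{B}^{\frac{5}{2}}_{\frac{6}{5-2\alpha},1}$ instead of $F\dot{B}^{\frac{3}{2}}_{\frac{6}{5-2\alpha},1}$). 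Applying Lemma \ref{le2.8} with both factors equal to $\nabla\phi$, the two terms it produces coincide, and each equals $\|\nabla\phi\|_{\mathcal{L}^{1}_{t}(F\dot{B}^{\frac{5}{2}}_{2,1})}\|\nabla\phi\|_{\mathcal{L}^{\infty}_{t}(F\dot{B}^{0}_{\frac{3}{4-\alpha},1})}\lesssim\|u\|_{\mathcal{L}^{1}_{t}(F\dot{B}^{\frac{3}{2}}_{2,1})}\|u\|_{\mathcal{L}^{\infty}_{t}(F\dot{B}^{\frac{3}{2}-\alpha}_{2,1})}$, exactly as in the Navier--Stokes case of Lemma \ref{le4.3}. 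Without this identity (or the interpolation repair sketched above), your Step 1 does not close.
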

\begin{proof}We split the proof into the following two steps.

\textbf{Step 1}.  According to Theorem \ref{th1.1}, it is clear that
\begin{align}\label{eq5.8}
   \left\|\mathcal{B}_{2}(u,u)\right\|_{\mathcal{L}^{\rho}_{t}(F\dot{B}^{3-\alpha-\frac{3}{p(\cdot)}+\frac{\alpha}{\rho}}_{p(\cdot),1})}&\lesssim \left\|\nabla\cdot(u\nabla(-\Delta)^{-1}u)\right\|_{\mathcal{L}^{1}_{t}(F\dot{B}^{3-\alpha-\frac{3}{p(\cdot)}}_{p(\cdot),1})}\nonumber\\&\approx \left\|u\nabla(-\Delta)^{-1}u\right\|_{\mathcal{L}^{1}_{t}(F\dot{B}^{4-\alpha-\frac{3}{p(\cdot)}}_{p(\cdot),1})}.
\end{align}
Furthermore, we can estimate the right-hand side of \eqref{eq5.8} as
\begin{align}\label{eq5.9}
   &\left\|u\nabla(-\Delta)^{-1}u\right\|_{\mathcal{L}^{1}_{t}(F\dot{B}^{4-\alpha-\frac{3}{p(\cdot)}}_{p(\cdot),1})}
   =\sum_{j\in\mathbb{Z}}\left\|2^{j(4-\alpha-\frac{3}{p(\cdot)})}\varphi_{j}\mathcal{F}\left(u\nabla(-\Delta)^{-1}u\right)\right\|_{L^{1}_{t}(L^{p(\cdot)}_{\xi})}\nonumber\\
   &\lesssim\sum_{j\in\mathbb{Z}}\sum_{k=-1}^{1}\left\|2^{-3j(\frac{6-(5-2\alpha)p(\cdot)}{6p(\cdot)})}\varphi_{j}\right\|_{L^{\frac{6p(\cdot)}{6-p(\cdot)}}_{\xi}}
   \left\|2^{\frac{3}{2}j}\varphi_{j+k}\mathcal{F}\left(u\nabla(-\Delta)^{-1}u\right)\right\|_{L^{1}_{t}L^{\frac{6}{5-2\alpha}}_{\xi}}\nonumber\\
  \nonumber\\
   &\lesssim \left\|2^{\frac{3}{2}j}\varphi_{j+k}\mathcal{F}\left(u\nabla(-\Delta)^{-1}u\right)\right\|_{L^{1}_{t}L^{\frac{6}{5-2\alpha}}_{\xi}}\nonumber\\
   &\approx  \left\|u\nabla(-\Delta)^{-1}u\right\|_{\mathcal{L}^{1}_{t}(F\dot{B}^{\frac{3}{2}}_{\frac{6}{5-2\alpha},1})}.
\end{align}
Notice carefully that the nonlinear term $u\nabla(-\Delta)^{-1}u$ has a nice symmetric structure:
\begin{equation*}
    u\nabla(-\Delta)^{-1}u=-\nabla\cdot\left(\nabla(-\Delta)^{-1}u\otimes\nabla(-\Delta)^{-1}u-\frac{1}{2}\left|\nabla(-\Delta)^{-1}u\right|^{2}I\right),
\end{equation*}
where $\otimes$ is a tensor product, and $I$ is a $3$-th order identity matrix, thus by using Lemma \ref{le2.7} and Lemma \ref{le2.8} with $p=\frac{6}{5-2\alpha}$, $p_{1}=2$ and $p_{2}=\frac{3}{4-\alpha}$, we obtain that
\begin{align}\label{eq5.10}
 \left\|u\nabla(-\Delta)^{-1}u\right\|_{\mathcal{L}^{1}_{t}(F\dot{B}^{\frac{3}{2}}_{\frac{6}{5-2\alpha},1})}
 &\approx\left\|\nabla(-\Delta)^{-1}u\otimes\nabla(-\Delta)^{-1}u-\frac{1}{2}\left|\nabla(-\Delta)^{-1}u\right|^{2}I\right\|_{\mathcal{L}^{1}_{t}(F\dot{B}^{\frac{5}{2}}_{\frac{6}{5-2\alpha},1})}\nonumber\\
 &\lesssim  \left\|\nabla(-\Delta)^{-1}u\right\|_{\mathcal{L}^{1}_{t}(F\dot{B}^{\frac{5}{2}}_{2,1})}\left\|\nabla(-\Delta)^{-1}u\right\|_{\mathcal{L}^{\infty}_{t}(F\dot{B}^{0}_{\frac{3}{4-\alpha},1})}\nonumber\\
 &\lesssim  \left\|\nabla(-\Delta)^{-1}u\right\|_{\mathcal{L}^{1}_{t}(F\dot{B}^{\frac{5}{2}}_{2,1})}\left\|\nabla(-\Delta)^{-1}u\right\|_{\mathcal{L}^{\infty}_{t}(F\dot{B}^{\frac{5}{2}-\alpha}_{2,1})}\nonumber\\
  &\lesssim  \left\|u\right\|_{\mathcal{L}^{1}_{t}(F\dot{B}^{\frac{3}{2}}_{2,1})}\left\|u\right\|_{\mathcal{L}^{\infty}_{t}(F\dot{B}^{\frac{3}{2}-\alpha}_{2,1})},
\end{align}
where we have used the Sobolev embedding $F\dot{B}^{\frac{5}{2}-\alpha}_{2,1}(\mathbb{R}^{3})\hookrightarrow F\dot{B}^{0}_{\frac{3}{4-\alpha},1}(\mathbb{R}^{3})$ in Lemma \ref{le2.7}. Taking the above estimates \eqref{eq5.9} and \eqref{eq5.10} into \eqref{eq5.8}, we obtain
\begin{align}\label{eq5.11}
   \left\|\mathcal{B}_{2}(u,u)\right\|_{\mathcal{L}^{\rho}_{t}(F\dot{B}^{4-\alpha-\frac{3}{p(\cdot)}+\frac{\alpha}{\rho}}_{p(\cdot),1})}&\lesssim \left\|u\right\|_{\mathcal{L}^{1}_{t}(F\dot{B}^{\frac{3}{2}}_{2,1})}\left\|u\right\|_{\mathcal{L}^{\infty}_{t}(F\dot{B}^{\frac{3}{2}-\alpha}_{2,1})}.
\end{align}

\textbf{Step 2}.  Applying Theorem \ref{th1.1} and Lemma \ref{le2.9} with $s_{1}=\frac{3}{2}$,  $s_{2}=\frac{5}{2}-\alpha$ and $p_{1}=p_{2}=2$ (the indices of the time variable obey the law of H\"{o}lder's inequality), we obtain
\begin{align}\label{eq5.12}
 \left\|\mathcal{B}_{2}(u,u)\right\|_{\mathcal{L}^{1}_{t}(F\dot{B}^{\frac{3}{2}}_{2,1})}+\left\|\mathcal{B}_{2}(u,u)\right\|_{\mathcal{L}^{\infty}_{t}(F\dot{B}^{\frac{3}{2}-\alpha}_{2,1})}&\leq \left\|\nabla\cdot(u\nabla(-\Delta)^{-1}u)\right\|_{\mathcal{L}^{1}_{t}(F\dot{B}^{\frac{3}{2}-\alpha}_{2,1})}\nonumber\\
 &\leq \left\|u\nabla(-\Delta)^{-1}u\right\|_{\mathcal{L}^{1}_{t}(F\dot{B}^{\frac{5}{2}-\alpha}_{2,1})}\nonumber\\
    &\leq C \left\|u\right\|_{\mathcal{L}^{1}_{t}(F\dot{B}^{\frac{3}{2}}_{2,1})}\left\|\nabla(-\Delta)^{-1}u\right\|_{\mathcal{L}^{\infty}_{t}(F\dot{B}^{\frac{5}{2}-\alpha}_{2,1})}
    \nonumber\\
    &\leq C \left\|u\right\|_{\mathcal{L}^{1}_{t}(F\dot{B}^{\frac{3}{2}}_{2,1})}\left\|u\right\|_{\mathcal{L}^{\infty}_{t}(F\dot{B}^{\frac{3}{2}-\alpha}_{2,1})}.
\end{align}
Putting the above estimates \eqref{eq5.11} and \eqref{eq5.12} together, we get \eqref{eq5.10}.  We complete the proof of Lemma \ref{le5.3}.
\end{proof}
\smallbreak

Based on the desired linear and nonlinear estimates obtained in Lemmas \ref{le5.1}--\ref{le5.3}, we know that there exist two positive
constants $C_{3}$ and $C_{4}$ such that
\begin{align*}
  \|u\|_{\mathcal{X}_{t}}\leq
 C_{3}\big(\|u_{0}\|_{F\dot{B}^{3-\alpha-\frac{3}{p(\cdot)}}_{p(\cdot),1}}+\|f\|_{\mathcal{L}^{1}_{t}(F\dot{B}^{3-\alpha-\frac{3}{p(\cdot)}}_{p(\cdot),1})}
 +\|f\|_{\mathcal{L}^{1}_{t}(F\dot{B}^{\frac{3}{2}-\alpha}_{2,1})}\big)+C_{4}\|u\|_{\mathcal{X}_{t}}^2.
\end{align*}
Thus if the initial data $v_{0}$ and $f$ satisfy the condition
$$
\|u_{0}\|_{F\dot{B}^{3-\alpha-\frac{3}{p(\cdot)}}_{p(\cdot),1}}+\|f\|_{\mathcal{L}^{1}_{t}(F\dot{B}^{3-\alpha-\frac{3}{p(\cdot)}}_{p(\cdot),1})}
+\|f\|_{\mathcal{L}^{1}_{t}(F\dot{B}^{\frac{3}{2}-\alpha}_{2,1})}\leq \frac{1}{4C_{3}C_{4}},
$$
we can apply Proposition \ref{pro2.12} to get global existence of solution $u\in \mathcal{Y}_{t}$ to the system \eqref{eq3.4}. We complete the proof of Theorem \ref{th3.2}.
\bigskip

\noindent \textbf{Acknowledgements.}
The authors declared that they have no conflict of interest. G. Vergara-Hermosilla thanks to Diego Chamorro for their helpful comments and advises.
J. Zhao was partially supported by the National Natural Science Foundation of China (no. 12361034) and
 the Natural Science Foundation of Shaanxi Province (no. 2022JM-034).


\end{document}